\DeclareMathAlphabet{\mathpzc}{OT1}{pzc}{m}{it}
\newtheorem{theorem}{Theorem}[section]
\newtheorem{lemma}[theorem]{Lemma}
\newtheorem{corollary}[theorem]{Corollary}
\newtheorem{proposition}[theorem]{Proposition}
\newcommand{\abs}[1]{\lvert#1\rvert}
\newcommand{\li}[1]{{\rm li} (#1)}
\newcommand{\Li}[1]{{\rm Li} (#1)}
\title{{\scshape Group-theoretic remarks on Goldbach's conjecture}}
\author{Liguo He \thanks{Corresponding author. E-mail address:  helg-lxy@sut.edu.cn} \quad Xianyu Hu
\\{\footnotesize Dept. of Math., Shenyang University of Technology, Shenyang, 110870,
 PR China}}
\date{}
\begin{document}
\maketitle

\noindent \textbf{Abstract.} {\footnotesize{The famous strongly binary Goldbach's conjecture asserts that every even number $2n \geq 8$ can always be expressible as the sum of two distinct odd prime numbers. We use a new approach to dealing with this conjecture. Specifically, we apply the element order prime graphs of alternating groups of degrees $2n$ and $2n-1$ to characterize this conjecture, and present its six group-theoretic versions; and further prove that this conjecture is true for $p+1$ and $p-1$ whenever $p \geq 11$ is a prime number.
}

\noindent \textbf{2000 MSC:} {\footnotesize 20D06, 05C25, 11P32 }

\noindent \textbf{Keywords:} {\footnotesize  alternating group, element order prime graph, Goldbach's conjecture}

\bigskip

\section{Introduction}

The famous \textit{Strongly Binary Goldbach Conjecture}\cite{gold} asserts that for every even number $2n \geq  8$, there exist distinct odd primes $p, q$ with $2n = p + q$. If this situation does occur,  $2n$ is called \textit{a Goldbach's number}. The conjecture is a well-known unsolved problem dating from 1742 due to C. Goldbach. It is commonly considered as an extremely difficult problem of analytic number theory these days. Considering the fundamental role of finite groups (especially, the alternating group $A_n$ of degree $n \geq 5$) in solving the radical solution problem of polynomial equations of degree $5$ or more (due to E. Galois, for example, see \cite{Is2}), we are inspired to attack the Goldbach's problem by appealing to the finite group theory and especially, the alternating group $A_n$ of degree $n \geq 8$. The following  \textsc{Theorem B} partially confirms our guess (although its proof is short), it also shows that there exist infinitely many Goldbach's numbers. It is achived  via \cite{gold} that all even numbers $2n \leq 4 \times 10^{18}$ are Goldbach's numbers (as of the year 2013) except possibly when $n$ is a prime.

Let G be a finite group and $\pi(G)$ the set of prime factors of its order. The element order prime graph $\Gamma (G)$ of G is a graph whose vertex-set $\mathcal{V}(G)$ is just $ \pi(G)$, and two vertices $p, q$ are joined by an edge whenever G contains an element of order pq. The edge set of $\Gamma(G)$ is denoted by $\mathcal{E}(G)$. This graph is also referred to as  Gruenberg-Kergel graph of $G$. Regarding this graph, we refer to \cite{L53,W22} for more detailed information. For groups $G_1$ and $G_2$, if $\mathcal{V}(G_1) \subseteq \mathcal{V}(G_2)$ and $\mathcal{E}(G_1) \subseteq \mathcal{E}(G_2)$, then $\Gamma(G_1)$ is said to be a subgraph of $\Gamma(G_2)$ and denoted by $\Gamma(G_1) \leq \Gamma(G_2)$. Furthermore, if $\mathcal{V}(G_1)$ is a proper subset of $\mathcal{V}(G_2)$, or $\mathcal{E}(G_1)$ is a proper subset of $\mathcal{E}(G_2)$, then $\Gamma(G_1)$ is called a proper subgraph of $\Gamma(G_2)$ and written as $\Gamma(G_1) < \Gamma(G_2)$. Following convention, we write $\pi(x)$ for the prime-counting function which stands for the number of primes not exceeding the positive real number $x$. By $A_n$, denote the alternating group of degree $n$. For a Sylow $p$-subgroup $P$ of $A_n$, $C_{A_n}(P)$ (resp. $N_{A_n}(P)$) indicates the centralizer (resp. normalizer) of $P$ in $A_{n}$. For two sets  $S_1$ and $S_2$, the difference set $S_2- S_1 = \{x \mid \, x \in S_2$ but $x \not\in S_1 \}$, whose cardinality is indicated by $\abs{S_2- S_1}$.

In this paper, we prove the following results.

\textsc{Theorem A \ }{\it Assume the above notation and the even integer $2n \geq 8$. Then the following assertions are equivalent.}
{\it
\begin{enumerate}
\item The even number $2n$ is a Goldbach's number.

\item $\Gamma(A_{2n-1})$ is a proper subgraph of $\Gamma(A_{2n})$.

\item $\abs{\mathcal{E}(A_{2n}) - \mathcal{E}(A_{2n-1})} \geq 1$.

\item $\abs{\mathcal{E}(A_{2n}) - \mathcal{E}(A_{2n-1})} \geq 1$ when both $\Gamma(A_{2n-1})$ and $\Gamma(A_{2n})$ are connected graphs.

\item  $\abs{\pi(C_{A_{2n}}(P)) - \pi(C_{A_{2n-1}}(P))} \geq 1$ for some odd prime $p$ with $n < p \leq 2n-3$ and some $P \in Syl_p(A_{2n-1})$.
\item  $\abs{\pi(N_{A_{2n}}(P)) - \pi(N_{A_{2n-1}}(P))} \geq 1$ for some odd prime $p$ with $n < p \leq 2n-3$ and some $P \in Syl_p(A_{2n-1})$.
\item $ dim\mathscr{U}\hspace{-0.5mm}(A_{2n}) > dim \mathscr{U}\hspace{-0.5mm}(A_{2n-1})$ for the biprimary spaces $\mathscr{U}\hspace{-0.5mm}(A_{2n})$ and $\mathscr{U}\hspace{-0.5mm}(A_{2n-1})$.
\end{enumerate}
}
Actually the edge number difference $\abs{\mathcal{E}(A_{2n}) - \mathcal{E}(A_{2n-1})}$ is exactly the number of expressions of $2n$ as sum of two distinct odd primes. This expression number seems to be limitless as $n$ approaches infinity. The inequality $\abs{\mathcal{E}(A_{2n}) - \mathcal{E}(A_{2n-1})} \geq 1$ also implies that $A_{2n}$ and $A_{2n-1}$ can be recognizable each other by prime graphs, but in general it is impossible between $A_{2n+1}$ and $A_{2n}$. For instance, if $4 \leq n \leq 29$, it can be  verified by GAP \cite{gap} that $\abs{\mathcal{E}(A_{2n+1}) - \mathcal{E}(A_{2n})} = 0$ just when $n \in \{ 6, 9, 12, 14, 15,18, 19, 21, 24, 26, 27, 29 \}$. (For the GAP command codes, see the appendix.) We mention that Theorem 1 in \cite{Gold-2} shows  that $A_n$ can be characterized by the full set of its element orders when $n \geq 5, n \neq 6, 10$.

Applying the prime graphic approach of finite groups, we may prove a class of even numbers to be Goldbach's numbers.

\textsc{Theorem B \ }{\it Assume that any $p \geq 11$ is an odd prime. Then the Strongly Binary Goldbach Conjecture is true for $p+1$ and $p-1$.}

Unless otherwise stated, the notation and terminology is standard, as presented in \cite{Is, Kurz, L46}.

\section{Prime graph}

The following observation is a basic but crucial fact, which appears as Proposition 1.1 in \cite{V5} without proof there. We restate it in the language of prime graph.
\begin{lemma}\label{v5}
Let $A_{n}$ denote the alternating group of degree $n \geq 8$.
\begin{enumerate}
\item For distinct odd primes $p, q \in \mathcal{V}(A_{n})$, the edge $pq \in \mathcal{E}(A_{n})$ if and only if $p + q \leq n$.
\item For distinct primes $2, p \in \mathcal{V}(A_{n})$, the edge $2p \in \mathcal{E}(A_{n})$ if and only if $p + 4 \leq n$.
\end{enumerate}
\end{lemma}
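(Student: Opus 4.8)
The plan is to translate the two edge conditions into statements about cycle types of permutations and then to keep track of signs in order to descend from $S_n$ to $A_n$. I will use three elementary facts repeatedly: the order of a permutation equals the least common multiple of its cycle lengths; a single cycle of length $\ell$ is an even permutation precisely when $\ell$ is odd; and a permutation of $\{1,\dots,n\}$ moves at most $n$ points.

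For the ``if'' directions I simply exhibit suitable elements. In (1), if $p+q\le n$ for distinct odd primes $p,q$, let $x$ be the product of a $p$-cycle and a disjoint $q$-cycle inside $\{1,\dots,n\}$; both factors are even since $p,q$ are odd, so $x\in A_n$, and $o(x)=\mathrm{lcm}(p,q)=pq$, whence $pq\in\mathcal E(A_n)$. In (2), if $p+4\le n$, take $x$ to be the product of a $p$-cycle and two disjoint transpositions; then $x$ is even (a product of an even permutation with two transpositions) and $o(x)=\mathrm{lcm}(p,2,2)=2p$, so $2p\in\mathcal E(A_n)$.

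For the ``only if'' directions, suppose $x\in A_n$ has order $pq$ (resp.\ $2p$). Each cycle length of $x$ divides its order, hence lies in $\{1,p,q,pq\}$ (resp.\ $\{1,2,p,2p\}$), and since the order is $pq$ (resp.\ $2p$) some cycle has length divisible by $p$ and some cycle has length divisible by $q$ (resp.\ by $2$). In case (1), whether these are one common cycle (of length a multiple of $pq$, hence $\ge pq\ge p+q$) or two disjoint cycles (of lengths $\ge p$ and $\ge q$), the number of points moved by $x$ is at least $p+q$; as this is at most $n$, we get $p+q\le n$. Case (2) additionally uses the sign of $x$: if $a,b,c$ denote the numbers of $2p$-, $p$- and $2$-cycles of $x$, then $\mathrm{sgn}(x)=(-1)^{a+c}=1$, so $a+c$ is even; minimizing the support size $2pa+pb+2c$ subject to this parity constraint and to ``$a\ge1$, or else $b\ge1$ and $c\ge1$'' yields a minimum of $p+4$ (attained at $a=0$, $b=1$, $c=2$), so $p+4\le n$.

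The step that needs genuine care is exactly this last minimization, i.e.\ the appearance of $p+4$ rather than a smaller number in (2). The naive witness for order $2p$ --- one $p$-cycle and one transposition, on $p+2$ points --- is an odd permutation and so is not in $A_n$; likewise a $p$-cycle together with a $4$-cycle, on $p+4$ points, is odd. One therefore has to enumerate the cycle types of order $2p$ with small support, compute each sign from the number of even-length cycles, and confirm that the first even permutation of order $2p$ occurs on precisely $p+4$ points, namely as one $p$-cycle plus two transpositions. The same bookkeeping covers the small prime $p=3$, for which the relevant divisor set is $\{1,2,3,6\}$.
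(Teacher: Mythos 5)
Your proposal is correct and follows essentially the same route as the paper: explicit witnesses (a $p$-cycle times a $q$-cycle, and a $p$-cycle times two transpositions) for the ``if'' directions, and for the ``only if'' directions the observation that the cycle lengths divide the order while an even permutation must contain an even number of even-length cycles, which forces the support to be at least $p+q$ (resp.\ $p+4$). Your explicit sign bookkeeping with $(-1)^{a+c}$ and the case minimization merely spells out in detail the paper's terser remark that the cycle decomposition of an element of order $2p$ in $A_n$ ``contains at least two even cycles,'' so there is nothing substantively different to report.
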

\begin{proof}
Let $p, q$ be different odd primes. If $p + q \leq n$, it is no loss to pick the element $(1, 2, \cdots, p)(p+1, \cdots, p+q) \in A_n$. If $p + 4 \leq n$, it is no loss to choose the element $(1, 2, \cdots, p)(p+1, p+2)(p+3, p+4) \in A_{n}$, the ``if" parts are obtained.

If the edge $pq \in \mathcal{E}(A_{n})$, then $A_{n}$ contains an element $x$ of order $pq$, and $x$ has disjoint cycle product expression $x = c_1 c_2 \cdots c_t$ with cycle lengths $\abs{c_i} = m_i$ and the order of $x$ is the least common multiple $[m_1, m_2, \cdots, m_t]$ which equals $pq$, and thus $m_i = p^{\alpha}q^{\beta}$ with $0 \leq \alpha, \beta \leq 1$.  Since also $\Sigma_{i = 1}^{t} m_i \leq n$, we get $p + q \leq n$. (This can also be attained by Corollary 1 of \cite{M52}.) If $A_{n}$ has an element $x$ of order $2p$, then its disjoint cycle product expression contains at least two even cycles (i.e., their cycle lengths are even numbers), the even cycles are either $2$-cycles or $2p$-cycles, hence $p + 4 \leq n$,  the ``only if" parts are achieved.
\end{proof}

The following is Part 2 of Theorem A.

\begin{theorem}\label{goldb3}
The Strongly Binary Goldbach's conjecture is true for $2n (\geq 8)$ if and only if the element order prime graph $\Gamma(A_{2n-1})$ is a proper subgraph of $\Gamma(A_{2n})$.
\end{theorem}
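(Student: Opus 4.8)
The plan is to turn the graph inclusion $\Gamma(A_{2n-1}) < \Gamma(A_{2n})$ into the arithmetic condition defining a Goldbach number, with Lemma~\ref{v5} doing all the real work once the vertices are disposed of.

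First I would observe that $\mathcal{V}(A_{2n}) = \mathcal{V}(A_{2n-1})$: the vertex set $\mathcal{V}(A_m)$ is the set of primes not exceeding $m$, and since $2n \geq 8$ is even it is composite, so no prime lies strictly between $2n-1$ and $2n$, and the two vertex sets coincide. Next, since $A_{2n-1}$ embeds in $A_{2n}$ as the stabilizer of a point, every element order of $A_{2n-1}$ is realized in $A_{2n}$, so $\mathcal{E}(A_{2n-1}) \subseteq \mathcal{E}(A_{2n})$ and hence $\Gamma(A_{2n-1}) \leq \Gamma(A_{2n})$ unconditionally. Combining these two facts, $\Gamma(A_{2n-1}) < \Gamma(A_{2n})$ holds if and only if $\mathcal{E}(A_{2n}) - \mathcal{E}(A_{2n-1}) \neq \emptyset$, i.e. there is an edge of $\Gamma(A_{2n})$ that is not an edge of $\Gamma(A_{2n-1})$.

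The second step is to identify which edges can lie in $\mathcal{E}(A_{2n}) - \mathcal{E}(A_{2n-1})$ by applying Lemma~\ref{v5}. For distinct odd primes $p, q$, part (1) gives $pq \in \mathcal{E}(A_{2n})$ iff $p + q \leq 2n$ and $pq \in \mathcal{E}(A_{2n-1})$ iff $p + q \leq 2n-1$; since $p+q$ is an integer, the edge $pq$ lies in the difference exactly when $p + q = 2n$. For an edge $2p$ with $p$ an odd prime, part (2) gives $2p \in \mathcal{E}(A_{2n})$ iff $p + 4 \leq 2n$ and $2p \in \mathcal{E}(A_{2n-1})$ iff $p + 4 \leq 2n-1$, so $2p$ lies in the difference only if $p + 4 = 2n$, i.e. $p = 2n - 4$; but $2n - 4$ is even and at least $4$, hence not prime, so no such edge occurs. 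Therefore $\mathcal{E}(A_{2n}) - \mathcal{E}(A_{2n-1}) \neq \emptyset$ if and only if $2n = p + q$ for some distinct odd primes $p, q$, which is precisely the statement that $2n$ is a Goldbach number.

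Putting the two steps together yields the equivalence. I do not expect a genuine obstacle here: the argument is essentially a dictionary between Lemma~\ref{v5} and the definition of a Goldbach number. The only points that need a little care are the remark that the vertex sets of $\Gamma(A_{2n-1})$ and $\Gamma(A_{2n})$ agree (so that ``proper subgraph'' reduces entirely to a strict inclusion of edge sets), and the verification that edges through the prime $2$ can never contribute to the difference --- which is exactly what forces both summands in $2n = p+q$ to be odd, as the conjecture requires.
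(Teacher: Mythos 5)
Your proposal is correct and follows essentially the same route as the paper: both reduce the proper-subgraph condition to a strict inclusion of edge sets via $\pi(A_{2n})=\pi(A_{2n-1})$, then apply Lemma~\ref{v5} to show that an edge between two odd primes lies in the difference exactly when $p+q=2n$, while an edge through $2$ can never contribute (the paper notes $p+4\leq 2n \iff p+4\leq 2n-1$ since $p+4$ is odd; you note $p=2n-4$ cannot be prime --- the same observation). No further comparison is needed.
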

\begin{proof}
For odd prime $p$, $p + 4 \leq 2n$ if and only if $p + 4 \leq 2n - 1$. Thus Lemma \ref{v5} yields that $2p$ is an edge of  $\Gamma(A_{2n-1})$ if and only if it is  also an edge of $\Gamma(A_{2n})$. Hence if $\Gamma(A_{2n-1})$ is a proper subgraph of $\Gamma(A_{2n})$, then there exist distinct odd primes $p, q$ such that the edge $ pq \in \mathcal{E}(A_{2n})$ but $ pq \not\in \mathcal{E}(A_{2n-1})$, thus Lemma \ref{v5} implies $p + q \leq 2n$ and $p+q > 2n-1$, hence we have $2n = p + q$, as desired. The reverse statement is immediate by noting the fact $\pi(A_{2n}) = \pi(A_{2n-1})$.
\end{proof}

The following is Part 3 of Theorem A.

\begin{corollary}\label{goldb4}
The Strong Binary Goldbach's conjecture is valid for $2n (\geq 8)$ if and only if \\ $\abs{\mathcal{E}(A_{2n}) - \mathcal{E}(A_{2n-1})} \geq 1$.
\end{corollary}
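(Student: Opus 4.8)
The plan is to derive Corollary~\ref{goldb4} directly from Theorem~\ref{goldb3} together with the elementary observation, already recorded in the proof of that theorem, that $\pi(A_{2n}) = \pi(A_{2n-1})$ and that the $2$-incident edges agree. First I would note that since $\mathcal{V}(A_{2n-1}) = \pi(A_{2n-1}) = \pi(A_{2n}) = \mathcal{V}(A_{2n})$ and every element of $A_{2n-1}$ is also an element of $A_{2n}$ (via the natural embedding fixing the point $2n$), we always have $\mathcal{E}(A_{2n-1}) \subseteq \mathcal{E}(A_{2n})$, so $\Gamma(A_{2n-1}) \leq \Gamma(A_{2n})$ unconditionally. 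Consequently $\Gamma(A_{2n-1})$ is a \emph{proper} subgraph of $\Gamma(A_{2n})$ if and only if the vertex sets differ or the edge sets differ; since the vertex sets coincide, this is equivalent to $\mathcal{E}(A_{2n-1}) \subsetneq \mathcal{E}(A_{2n})$, i.e. to $\abs{\mathcal{E}(A_{2n}) - \mathcal{E}(A_{2n-1})} \geq 1$.

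The second step is simply to chain this equivalence with Theorem~\ref{goldb3}: the Strongly Binary Goldbach Conjecture holds for $2n$ $\iff$ $\Gamma(A_{2n-1}) < \Gamma(A_{2n})$ $\iff$ $\abs{\mathcal{E}(A_{2n}) - \mathcal{E}(A_{2n-1})} \geq 1$. So there is essentially nothing new to prove; the corollary is a reformulation of Theorem~\ref{goldb3} using the definition of ``proper subgraph'' specialized to the situation where the two graphs have identical vertex set. I would make sure to cite the fact $\pi(A_{2n}) = \pi(A_{2n-1})$ explicitly, since that is precisely what collapses the two possible sources of properness (vertex-properness and edge-properness) down to the single edge-count condition.

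The only mild subtlety — and the closest thing to an obstacle — is making the inclusion $\mathcal{E}(A_{2n-1}) \subseteq \mathcal{E}(A_{2n})$ airtight and observing that $2n \geq 8$ is exactly the hypothesis needed for Lemma~\ref{v5} (hence for Theorem~\ref{goldb3}) to apply; for smaller $n$ one would have to check the prime graphs by hand, but that is outside the stated range. One should also remark that $\abs{\mathcal{E}(A_{2n}) - \mathcal{E}(A_{2n-1})}$ counts exactly the odd-prime edges $pq$ with $p + q = 2n$, i.e. the number of Goldbach representations of $2n$ by two distinct odd primes, which is the quantitative refinement already advertised after the statement of Theorem~A; I would include this remark to connect the corollary to that discussion, but it is not needed for the proof itself.
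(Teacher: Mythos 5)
Your proposal is correct and follows essentially the same route as the paper: both use the equality $\pi(A_{2n}) = \pi(A_{2n-1})$ to reduce properness of the subgraph relation to properness of the edge-set inclusion, and then invoke Theorem~\ref{goldb3}. Your additional justification of the unconditional inclusion $\mathcal{E}(A_{2n-1}) \subseteq \mathcal{E}(A_{2n})$ via the natural embedding is a harmless (and slightly more careful) elaboration of what the paper leaves implicit.
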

\begin{proof}
 Since $\pi(A_{2n}) = \pi(A_{2n-1})$,  we reach that $\Gamma(A_{2n-1})$ is a proper subgraph of $\Gamma(A_{2n})$ if and only if $\mathcal{E}(A_{2n-1})$ is a proper subset of $\mathcal{E}(A_{2n})$, that is, $\abs{\mathcal{E}(A_{2n}) - \mathcal{E}(A_{2n-1})} \geq 1$. Thus the desired result follows from Theorem \ref{goldb3}.
\end{proof}
Because $\abs{\mathcal{E}(A_{2n}) - \mathcal{E}(A_{2n-1})} \geq 0$ is an integer,  it is easy to see that  $\abs{\mathcal{E}(A_{2n}) - \mathcal{E}(A_{2n-1})} \geq 1$ if and only if $\abs{\mathcal{E}(A_{2n}) - \mathcal{E}(A_{2n-1})} > 0$. A little bit of difference between the two expressions is of meaningful sometimes in order to prove this conjecture when the method of analytic number theory is applied.

Let the alternating group $A_n$ permute the symbol set $ \Omega = \{1, 2, \cdots, n\}$ and write $A_t (\leq A_n)$ as $A_{(s, s+ t)}$ in order to indicate the moved elements to be at most in the symbol subset $\{s, s+1, \cdots, s+t\} \subseteq \Omega$. For $ s+t < r < r+ m \leq n$, write $ A_{(s, s+t)} \times A_{(r, r+m)} < A_n$ to denote the inner direct product of $ A_{(s, s+t)}$ and $A_{(r, r+m)}$ in $A_n$.
\begin{lemma}\label{cean}
Let $q$ be a prime with $n/2 < q \leq n-2$ and $n \geq 8$, and let $Q = \langle  x \rangle \in Syl_q(A_n)$ for some element $x \in A_n$ of order $q$. Assume that $Q$ just permutes the symbol subset $\{1, 2, \cdots, q\}$. Then  $\abs{C_{A_n}(x)} = \abs{C_{A_n}(Q)} = q\frac{(n-q)!}{2}$ and  $\abs{N_{A_n}(Q)} = (q-1)q\frac{(n-q)!}{2}$. Furthermore, $C_{A_n}(x) = C_{A_n}(Q) = Q \times A_{(q+1, n)}$ and  $N_{A_n}(Q) = (Q \rtimes C_{q-1})\times A_{(q+1, n)}$ where $C_{q-1}$ is a cyclic subgroup of $A_{(1, q)}$ with order $q-1$ and the subgroup $Q\rtimes C_{q-1}$ is a Frobenius group.
\end{lemma}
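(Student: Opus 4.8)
The plan is to do everything first inside the symmetric group $S_n$, where the centraliser and normaliser of a single cycle are classical, and then to cut down to $A_n$ by restricting the sign homomorphism.

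First I would reduce to a concrete generator. Since $n/2<q\le n$, the exact power of $q$ dividing $n!$ is $q$, so $\abs{Q}=q$ and every nontrivial element of $Q$ has order $q$; moving at most $q$ points, such an element is a single $q$-cycle supported on $\{1,\dots,q\}$. Thus $Q$ is a Sylow $q$-subgroup of $\mathrm{Sym}(\{1,\dots,q\})$, and after replacing $Q$ by a conjugate under $\mathrm{Sym}(\{1,\dots,q\})$ — which fixes $\{q+1,\dots,n\}$ pointwise and so alters neither the hypotheses nor the claimed equalities — I may take $x=(1,2,\dots,q)$. Now the $S_n$-computations: an element commuting with $x$ permutes the $\langle x\rangle$-orbits preserving length, hence stabilises $\{1,\dots,q\}$ and its complement, and on $\{1,\dots,q\}$ it centralises a full cycle so lies in $\langle x\rangle$; therefore $C_{S_n}(x)=Q\times\mathrm{Sym}(\{q+1,\dots,n\})$, of order $q(n-q)!$. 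An element normalising $Q$ sends $x$ to another generator, again a $q$-cycle on $\{1,\dots,q\}$, so it too stabilises that block, whence $N_{S_n}(Q)=N_{\mathrm{Sym}(\{1,\dots,q\})}(Q)\times\mathrm{Sym}(\{q+1,\dots,n\})$; counting the $(q-1)!$ $q$-cycles in $\mathrm{Sym}(\{1,\dots,q\})$ shows it has $(q-2)!$ Sylow $q$-subgroups, so $N_{\mathrm{Sym}(\{1,\dots,q\})}(Q)$ has order $q(q-1)$ and is the Frobenius group $Q\rtimes\mathrm{Aut}(Q)\cong\mathrm{AGL}(1,q)$ with cyclic complement of order $q-1$.

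Then I would descend to $A_n$. As $q$ is odd, $Q\le A_n$, and $C_{A_n}(x)=C_{A_n}(Q)=C_{S_n}(x)\cap A_n$, $N_{A_n}(Q)=N_{S_n}(Q)\cap A_n$. On $C_{S_n}(x)$ the sign map kills $Q$ and is the usual sign on $\mathrm{Sym}(\{q+1,\dots,n\})$, which is onto $\{\pm1\}$ since $n-q\ge2$; its kernel is $Q\times\mathrm{Alt}(\{q+1,\dots,n\})=Q\times A_{(q+1,n)}$ of order $q(n-q)!/2$, which is the centraliser assertion. For the normaliser one notes that a generator $g$ of the complement of $\mathrm{AGL}(1,q)$ fixes one symbol and is a single $(q-1)$-cycle on the rest, and since $q-1$ is even $g$ is an \emph{odd} permutation; hence the sign map is already surjective on $N_{\mathrm{Sym}(\{1,\dots,q\})}(Q)$, so $[N_{S_n}(Q):N_{A_n}(Q)]=2$ and $\abs{N_{A_n}(Q)}=q(q-1)(n-q)!/2$. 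To pin down the structure, fix a transposition $t\in\mathrm{Sym}(\{q+1,\dots,n\})$ and set $h=gt$: a short sign check ($\mathrm{sgn}(h^k)=(-1)^{\gcd(k,q-1)+k}=1$ for all $k$) gives $h\in A_n$, while $h$ has order $q-1$, normalises $Q$ inducing a generator of $\mathrm{Aut}(Q)$, normalises $A_{(q+1,n)}$, and $\langle h\rangle\cap(Q\times A_{(q+1,n)})=1$. Hence $(Q\times A_{(q+1,n)})\langle h\rangle$ is a subgroup of $N_{A_n}(Q)$ of the full order $q(q-1)(n-q)!/2$, so equals it, and $Q\langle h\rangle\cong Q\rtimes C_{q-1}$ is Frobenius since the complement acts fixed-point-freely on $Q$; this is the asserted form of $N_{A_n}(Q)$, the cyclic complement $C_{q-1}$ being realised in $A_{(1,q)}$ up to the twist by $t$.

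The step I expect to be the real obstacle is precisely this sign bookkeeping for the normaliser: one must recognise that the order-$(q-1)$ complement of $\mathrm{AGL}(1,q)$ is \emph{not} contained in $A_{(1,q)}$, but has to be multiplied by an odd permutation of the remaining symbols to land in $A_n$ — this is what forces the index $2$ (rather than $1$) drop in passing from $S_n$ to $A_n$ and what determines the exact shape of the Frobenius complement. The centraliser part, by contrast, is routine once the reduction to $x=(1,2,\dots,q)$ is in place.
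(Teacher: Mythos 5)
Your proposal is correct in its mathematics and takes a genuinely different route from the paper: you compute $C_{S_n}(x)$ and $N_{S_n}(Q)\cong \mathrm{AGL}(1,q)\times \mathrm{Sym}(\{q+1,\dots,n\})$ first and then cut down to $A_n$ with the sign homomorphism, whereas the paper works inside $A_n$ throughout, counting cosets of $C_{A_n}(Q)$ in $N_{A_n}(Q)$ by conjugating $x$ onto each power $x^m$ by an even permutation (inserting the transposition $(q+1,q+2)$ when needed), and then applying the Dedekind identity to the asserted containment $N_{A_n}(Q)\le A_{(1,q)}\times A_{(q+1,n)}$. Your route is the more reliable one, because that containment --- which the paper states with no real justification --- is exactly what your sign bookkeeping refutes: your element $h=gt$, with $g$ a $(q-1)$-cycle on $\{1,\dots,q\}$ and $t$ a transposition on $\{q+1,\dots,n\}$, normalizes $Q$, lies in $A_n$, but is odd on each block separately, so it is not in $A_{(1,q)}\times A_{(q+1,n)}$. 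In consequence the lemma's normalizer decomposition is false as stated: $N_{A_{(1,q)}}(Q)=\mathrm{AGL}(1,q)\cap \mathrm{Alt}(\{1,\dots,q\})$ has order $q(q-1)/2$, so no cyclic $C_{q-1}\le A_{(1,q)}$ of order $q-1$ normalizing $Q$ exists, and $A_{(q+1,n)}$ is not a direct factor of $N_{A_n}(Q)$. (For $n=8$, $q=5$: the subgroup of $F_{20}\times S_3$ of pairs with equal sign has trivial centre, while $(Q\rtimes C_4)\times A_3$ has centre of order $3$, so the two groups are not even abstractly isomorphic.)

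What survives --- and what the rest of the paper actually uses --- are the order formulas, the centralizer statement $C_{A_n}(Q)=Q\times A_{(q+1,n)}$, the isomorphism $N_{A_n}(Q)/C_{A_n}(Q)\cong C_{q-1}$, and the existence of a Frobenius subgroup $Q\rtimes C_{q-1}$ of order $q(q-1)$ inside $N_{A_n}(Q)$ (your $Q\langle h\rangle$); the correct global structure is the one you obtained, $N_{A_n}(Q)=\bigl(Q\times A_{(q+1,n)}\bigr)\rtimes\langle h\rangle$ with $h$ acting nontrivially on both factors. The one thing to change in your write-up is the closing claim that this ``is the asserted form \dots\ up to the twist by $t$'': it is not the asserted form, since $h$ does not centralize $A_{(q+1,n)}$. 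You should say plainly that the statement must be corrected from a direct to a semidirect product over $A_{(q+1,n)}$, with the Frobenius complement not realizable inside $A_{(1,q)}$; as written, your last sentence papers over the very discrepancy your argument uncovers.
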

\begin{proof}
For the element $x \in Q$ of order $q$, then it is a $q$-cycle and so $C_{A_n}(x) = C_{A_n}(Q)$. Using the crucial observation  $x^g = (i_1^g, i_2^g, \cdots, i_q^g)$ for any $g \in A_n$, we may deduce $C_{A_n}(Q) = Q \times A_{(q+1, n)}$ and so $\abs{C_{A_n}(Q)} = q\frac{(n-q)!}{2}$. Note that if $x = (1, 2, \cdots, q)$, then $x^g = x$ implies $$(1^g, 2^g, \cdots, q^g) = (1, 2, 3, \cdots, q) = (q, 1, 2, \cdots, q-1) = \cdots = (2, 3, \cdots, q, 1).$$

For $g \in N_{A_n}(Q)$, we have $x^g = x^k \in Q$ for some $1 \leq k \leq q-1$, and $x^k$ is still a $q$-cycle. If there exists $h \in N_{A_n}(Q)$ such that $x^h = x^k$, then $x^{hg^{-1}} = x$ and so $hg^{-1} \in C_{A_n}(Q)$ and $h \in gC_{A_n}(Q)$. Indeed these further implies that $N_{A_n}(Q) \leq A_{(1, q)} \times A_{(q+1, n)}$. Note that we have $gC_{A_n}(Q) = C_{A_n}(Q)g$ for $g \in N_{A_n}(Q)$ as $C_{A_n}(Q) \trianglelefteq  N_{A_n}(Q)$

For $1 \leq m \leq q-1$, each of $x$ and $x^m$ is $q$-cycle, so both of them are conjugate, i.e., $x^m = x^h$, $h \in S_n$ (which denotes the symmetric group of degree n). We may choose the element $h$ in $A_n$. As $x \in A_{(1,q)}$, we get $x^m \in A_{(1,q)}$. If $h$ is an odd permutation, we may replace $h$ with the disjoint cycle product $yh$, where $y = (q+1,q+2)$ and $q \leq n-2$. We see $x^{yh} = x^h = x^m$. For $h \in A_n$, we may further derive that $h \in A_{(1, q)}\times A_{(q+1, n)}$ and $h \in N_{A_n}(Q)$, this is because $x^m$ and $x$ lie in $Q$.

 For $1 \leq m < n \leq q-1$, let $x^m = x^{g_1}$ and $x^n = x^{g_2}$ for $g_1, g_2 \in A_n$, we claim $C_{A_n}(Q)g_1 \neq C_{A_n}(Q)g_2$. If otherwise, $x^{g_1} = x^{g_2}$ and so $x^n = x^m$, then $x^{n-m} = 1$, this is a contradiction since the order of $x$ is $q$. Hence we deduce $\abs{N_{A_n}(Q)/C_{A_n}(Q)} = q-1$. The N/C Theorem further yields $N_{A_n}(Q)/C_{A_n}(Q) \cong C_{q-1}$.
Since $N_{A_n}(Q) \leq A_{(1, q)}\times A_{(q+1, n)}$, it follows via Dedekind identity that $$N_{A_n}(Q) = (N_{A_n}(Q) \cap A_{(1, q)})\times A_{(q+1, n)} = N_{A_{(1, q)}}(Q)\times A_{(q+1, n)}.$$
Since also $Q$ is a normal Sylow $q$-subgroup of $N_{A_{(1, q)}}(Q)$ and $\abs{Q} = q$, we can derive that $N_{A_{(1, q)}}(Q) = Q \rtimes C_{q-1}$ for some $C_{q-1} < A_{(1, q)}$. Note that the following equality $$ \frac{\abs{N_{A_{(1, q)}}(Q)}}{\abs{Q}} = \frac{\abs{N_{A_{(1, q)}}(Q)}\abs{A_{(q+1, n)}}}{\abs{Q}\abs{A_{(q+1, n)}}} = \abs{N_{A_n}(Q)/C_{A_n}(Q)} =  q-1.$$
Therefore, we conclude that $N_{A_n}(Q) = (Q \rtimes C_{q-1}) \times A_{(q+1, n)}.$  Because $C_{A_{(1, q)}}(Q) = Q$, we reach that $C_{q-1}$ acts fixed-point-freely on $Q$, it follows via \cite[Theorem 8.1.12]{Kurz} that $Q \rtimes C_{q-1}$ is a Frobenius group. The proof is complete.
\end{proof}

Observe that the above result actually implies both $N_{A_{2n}}(Q)$ and $N_{A_{2n-1}}(Q)$ have the same Frobenius subgroups of order $pq$.

\begin{lemma}\label{edgenumber}
Let the natural number $n \geq 4$ and the primes $q$ with $n < q \leq 2n-3$, then
\begin{enumerate}
\item The number of edges incident to vertices $q$ of $\Gamma(A_{2n})$ is equal to $\underset{n < q \leq 2n-3}{\sum} |\pi(A_{2n - q})|$ which equals $\underset{n < q \leq 2n-3}{\sum}\pi({2n - q})$.

\item The number of edges incident to vertices $q$ of $\Gamma(A_{2n-1})$ is equal to $\underset{n < q \leq 2n-3}{\sum} |\pi(A_{2n -1 - q})|$ which equals $\underset{n < q \leq 2n-3}{\sum}\pi({2n -1 - q})$.
\end{enumerate}
\end{lemma}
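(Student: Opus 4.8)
\emph{Proof proposal.} The plan is to reduce both statements to Lemma~\ref{v5} and then count edges directly; parts 1 and 2 are parallel (part 2 results from part 1 on replacing $2n$ by $2n-1$), so I would prove part 1 in detail and then indicate the routine change. Write $L=\{q:\ q\text{ prime},\ n<q\le 2n-3\}$. The first step is to note that $L$ is an independent set of $\Gamma(A_{2n})$: if $p,q\in L$ then $p+q>2n$, so by Lemma~\ref{v5}(1) the edge $pq$ is absent. Hence each edge meeting $L$ has exactly one endpoint in $L$, and the number of edges of $\Gamma(A_{2n})$ incident to the vertices of $L$ equals $\sum_{q\in L}\deg_{\Gamma(A_{2n})}(q)$.

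The second step is to evaluate $\deg_{\Gamma(A_{2n})}(q)$ for a fixed $q\in L$. By Lemma~\ref{v5}(1) an odd prime $p$ is a neighbour of $q$ precisely when $p\le 2n-q$; since $q>n$ forces $2n-q<n<q$, any such $p$ is automatically $\ne q$, so the odd-prime neighbours of $q$ are exactly the odd primes in $[3,2n-q]$. By Lemma~\ref{v5}(2) the vertex $2$ is a neighbour of $q$ precisely when $q+4\le 2n$, i.e.\ when $2n-q\ge 4$. Therefore
$$\deg_{\Gamma(A_{2n})}(q)=\#\{\text{odd primes}\le 2n-q\}+\varepsilon_q,\qquad \varepsilon_q=\begin{cases}1,& 2n-q\ge 4,\\ 0,& 2n-q=3,\end{cases}$$
and these two cases are exhaustive since $q\le 2n-3$ forces $2n-q\ge 3$.

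The third step is to identify the right-hand side with $|\pi(A_{2n-q})|$. As $|A_m|=m!/2$, an odd prime divides $|A_m|$ iff it is $\le m$, while $2$ divides $|A_m|$ iff $m\ge 4$ (for $m=3$ one has $|A_3|=3$, and $A_m$ is trivial for $m\le 2$); so $|\pi(A_m)|=\#\{\text{odd primes}\le m\}+[m\ge 4]$. Taking $m=2n-q$, which lies in $\{3,4,\cdots,n-1\}$, this coincides with the displayed degree, so $\deg_{\Gamma(A_{2n})}(q)=|\pi(A_{2n-q})|$; summing over $q\in L$ gives the first equality in part~1, and the second then follows from $|\pi(A_m)|=\pi(m)$ for $m\ge 4$. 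Part~2 is the identical computation with $2n-1$ replacing $2n$ (applying Lemma~\ref{v5} to $A_{2n-1}$, which is legitimate once $2n-1\ge 8$; the lone case $n=4$, where the group is $A_7$, is handled by direct inspection), the only new feature being that $m=2n-1-q$ can now equal $2$, namely for $q=2n-3$, in which case $|\pi(A_2)|=0$ correctly records that $q$ has no neighbour at all.

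I do not expect a genuine obstacle: granted Lemma~\ref{v5}, the argument is pure bookkeeping. The single point demanding care is the boundary prime $q=2n-3$, where $2n-q$ (resp.\ $2n-1-q$) collapses to $3$ (resp.\ $2$); there $|A_{2n-q}|$ is odd (resp.\ the group is trivial), so $|\pi(A_{2n-q})|$ is not $\pi(2n-q)$, and the equality of the degree with $|\pi(A_{2n-q})|$ must be checked directly in that case instead of being read off from the generic $m\ge 4$ count.
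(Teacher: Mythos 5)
Your argument is correct and arrives at the same count as the paper, but by a slightly different route: you work entirely from Lemma~\ref{v5}, computing $\deg_{\Gamma(A_{2n})}(q)$ directly and then matching it against $\abs{\pi(A_{2n-q})}$ by elementary divisibility of $(2n-q)!/2$, whereas the paper's forward direction goes through Lemma~\ref{cean} --- an element of order $pq$ has $q$-part generating a Sylow $q$-subgroup $Q$ (a single $q$-cycle, since $q>n$) and $p$-part lying in $C_{A_{2n}}(Q)=Q\times A_{(q+1,2n)}$, whence $p\in\pi(A_{2n-q})$. Your version is more self-contained; the paper's sets up the later centralizer and normalizer reformulations (Corollary~\ref{cenver2}, Theorem~\ref{norver2}). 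More importantly, your boundary analysis at $q=2n-3$ is a genuine catch that the paper dismisses as ``straightforward'': when $2n-3$ is prime one has $2n-q=3$, and $\abs{\pi(A_3)}=1$ while $\pi(3)=2$ (likewise $\abs{\pi(A_2)}=0$ while $\pi(2)=1$ in part~2), so the second equality asserted in the lemma is off by one in that term; the first equality, with $\abs{\pi(A_{2n-q})}$, is the correct one, as your direct degree check shows. Since the discrepancy is the same $+1$ in both parts, it cancels in the difference used in Theorem~\ref{edgeequ}, so nothing downstream breaks, but the identification $\abs{\pi(A_m)}=\pi(m)$ should be restricted to $m\ge 4$ as you indicate.
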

\begin{proof}
For $n < q \leq 2n-3$, if $\mathcal{E}(A_{2n})$ contains edge $pq$, then $A_{2n}$ has an element $g$ of order $pq$ which can be uniquely written in the form $g = g_pg_p$ with $p$-part $g_p$ and $q$-part $g_q$ of $g$, thus $g_p \in C_{A_{2n}}(g_q)$, Lemma \ref{cean} yields that $p \in \pi(A_{2n-q})$. Conversely, if $p \in \pi(A_{2n-q})$, then $p+ q \leq 2n$ for odd $p$ and $4 + q \leq 2n$ for $p = 2$, Lemma \ref{v5} yields $A_{2n}$ has an element $g$ of order $pq$. Note that if $2 \in \pi(A_{2n-q})$, then $2$ divides $(2n-q)!/2$, thus $2n-q \geq 5$ so that $4 + q < 2n$. It is straightforward that $$\underset{n < q \leq 2n-3}{\sum} |\pi(A_{2n - q})| = \underset{n < q \leq 2n-3}{\sum}\pi({2n - q}).$$  Part 1 follows.  And Part 2 can be derived in a similar manner.
\end{proof}

\begin{theorem}\label{edgeequ}
Let the natural number $n \geq 4$, then
$$\abs{\mathcal{E}(A_{2n}) - \mathcal{E}(A_{2n-1})} = \underset{3 \leq p < n}{\sum} (\pi(2n - p) - \pi(2n-1-p)).$$
\end{theorem}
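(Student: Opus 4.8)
The plan is to identify the symmetric difference of the two edge sets explicitly and then to recognise the right-hand sum as counting exactly the same objects.

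As a preliminary I would record two facts. Since $2n \geq 8$ is composite, $\mathcal{V}(A_{2n}) = \mathcal{V}(A_{2n-1})$ (both equal the set of primes $\leq 2n-1$), a fact already used above; and since $A_{2n-1}$ sits inside $A_{2n}$ as a point stabiliser, every element order of $A_{2n-1}$ occurs in $A_{2n}$, so $\mathcal{E}(A_{2n-1}) \subseteq \mathcal{E}(A_{2n})$. (Both also fall straight out of Lemma \ref{v5}.) Hence $\mathcal{E}(A_{2n}) - \mathcal{E}(A_{2n-1})$ is a genuine set difference and $\abs{\mathcal{E}(A_{2n}) - \mathcal{E}(A_{2n-1})} = \abs{\mathcal{E}(A_{2n})} - \abs{\mathcal{E}(A_{2n-1})}$.

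Next I would pin down which edges are gained in passing from $A_{2n-1}$ to $A_{2n}$, using Lemma \ref{v5}. For the prime $2$, the condition $p + 4 \leq 2n$ is equivalent to $p + 4 \leq 2n-1$ since $p+4$ is odd, so edges of type $2p$ contribute nothing. For distinct odd primes $p, q$, the edge $pq$ lies in $\mathcal{E}(A_{2n})$ if and only if $p+q \leq 2n$ and in $\mathcal{E}(A_{2n-1})$ if and only if $p+q \leq 2n-1$, so, as $p+q$ is an integer, $pq$ lies in the difference exactly when $p+q = 2n$. Therefore $\abs{\mathcal{E}(A_{2n}) - \mathcal{E}(A_{2n-1})}$ equals the number of unordered pairs $\{p,q\}$ of distinct odd primes with $p+q = 2n$. (One could instead deduce this from Lemma \ref{edgenumber} applied to both groups, cancelling the edges among the ``small'' vertices $\leq n$, which are common to both graphs; that route is a little more delicate because of the mismatch between $\abs{\pi(A_m)}$ and $\pi(m)$ at $m = 3$, so I would favour the direct argument.)

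It then remains to match this count with the stated sum, whose index $p$ runs over primes (the convention used already in Lemma \ref{edgenumber}). The key point is that $\pi(2n-p) - \pi(2n-1-p)$ equals $1$ when $2n-p$ is prime and $0$ otherwise, since $\pi$ increases by exactly one at each prime and is constant over composites. So the right-hand side counts precisely the primes $p$ with $3 \leq p < n$ for which $2n-p$ is again prime. To finish, I would spell out the bijection: given an unordered pair $\{p,q\}$ of distinct odd primes with $p+q = 2n$, written with $p < q$, the relations $p \neq q$ and $p + q = 2n$ force $3 \leq p < n < q = 2n-p \leq 2n-3$, so $2n-p$ is prime with $3 \le p < n$; conversely each such prime $p$ returns the pair $\{p,\,2n-p\}$. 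Both sides count the same finite set, hence are equal.

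I do not expect a substantial obstacle: once Lemma \ref{v5} is in hand the statement is essentially a bookkeeping identity. The matters that call for care are purely combinatorial — checking that the prime-$2$ edges genuinely cancel rather than being double counted, that $n \geq 4$ keeps every index range non-degenerate and in particular excludes a spurious ``pair'' $\{n,n\}$, and, should one route through Lemma \ref{edgenumber} instead, handling the small-argument mismatch noted above.
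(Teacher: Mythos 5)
Your proof is correct, and it reaches the identity by a genuinely more direct route than the paper. The paper first establishes Lemma \ref{edgenumber} (counting, via the centralizer structure of Lemma \ref{cean}, the edges incident to each ``large'' prime $n < q \leq 2n-3$ as $\abs{\pi(A_{2n-q})} = \pi(2n-q)$), takes the difference of the two resulting sums over $q$, and then re-indexes via $p = 2n-q$ to arrive at the stated sum over $3 \leq p < n$. You instead characterize the set $\mathcal{E}(A_{2n}) - \mathcal{E}(A_{2n-1})$ outright from Lemma \ref{v5} --- the $2p$ edges cancel by parity and an odd-odd edge $pq$ survives exactly when $p+q=2n$ --- and then read off the right-hand side as the indicator sum for ``$2n-p$ prime,'' matching the two counts by the obvious bijection. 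Your approach buys two things: it makes the containment $\mathcal{E}(A_{2n-1}) \subseteq \mathcal{E}(A_{2n})$ and the handling of the prime $2$ explicit, and it sidesteps the boundary defect you correctly spotted in Lemma \ref{edgenumber}: at $q = 2n-3$ one has $\abs{\pi(A_3)} = 1$ while $\pi(3) = 2$, so the two expressions in that lemma actually disagree there. The paper's route nonetheless lands on the correct theorem because the same off-by-one occurs in $\abs{\pi(A_2)}$ versus $\pi(2)$ and the discrepancies cancel in the difference $\pi(2n-q)-\pi(2n-1-q)$; your direct argument never has to invoke this cancellation. Your reading of the index $p$ as ranging over primes is the intended one (as in the paper's own proof, which speaks of ``a unique odd $p$''), and your checks that $n \geq 4$ keeps the ranges non-degenerate and that $p = q = n$ cannot occur are exactly the right points of care.
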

\begin{proof}
By Lemma \ref{edgenumber}, we see $$\abs{\mathcal{E}(A_{2n}) - \mathcal{E}(A_{2n-1})} = \underset{n < q \leq 2n-3}{\sum}(\pi({2n - q})- \pi({2n -1 - q})).$$
It is easy to see that $\pi({2n - q})- \pi({2n -1 - q})$ equals either $1$ or else $0$. If $\pi({2n - q})- \pi({2n -1 - q})= 1$, then there exists a unique odd $p < n$ such that the edge $pq \in \mathcal{E}(A_{2n}) - \mathcal{E}(A_{2n-1})$, Lemma \ref{v5} yields $ 2n-1 < p + q \leq 2n$ and so $q = 2n - p$, thus $q \in \pi(A_{2n-p})$ but $q \not\in \pi(A_{2n-1-p})$, hence $\pi(2n-p) - \pi(2n-1-p) = 1$, and vice versa. Therefore we conclude that

$$ \underset{n < q \leq 2n-3}{\sum}(\pi({2n - q})- \pi({2n -1 - q})) = \underset{3 \leq p < n}{\sum} (\pi(2n - p) - \pi(2n-1-p)),$$ yielding the desired result.
\end{proof}

The prime number theorem (PNT for short) with the best known error term is $$ \pi(x) = \li{x} + O(x \exp(-c(\log x)^{3/5}(\log\log x)^{-1/5}))$$ for some strictly positive constant $c$. This can be found in\cite[p250]{Cohen2} and the definition of $\li{x}$ is in \cite[p257]{Cohen2} where $\li{x}$ is denoted $\Li{x}$. Under the Riemman's hypothesis, it is known via \cite{gold} that the PNT has a concise form $\abs{\pi(x)-\li{x}} < \frac{\sqrt{x}\log x}{8\pi}$. We may set $\pi(x) = \li{x} + h(x)\frac{\sqrt{x}\log x}{8\pi}$ for some function $h(x)$ satisfying $\abs{h(x)} < 1$. Thus it seems natural to estimate the above expression of Theorem \ref{edgeequ} by using the PNT, but it is necessary to deeply understand $h(x)$.  Note that in order to prove $\abs{\mathcal{E}(A_{2n}) - \mathcal{E}(A_{2n-1})} \geq 1$, it is enough to prove $\abs{\mathcal{E}(A_{2n}) - \mathcal{E}(A_{2n-1})} > 0$. For any enough small positive number $\varepsilon$, we may let $\bar{h}(x) = \varepsilon h(x)$, then $\abs{\bar{h}(x)} < \varepsilon$ and the PNT has form $\pi(x) = \li{x} + \bar{h}(x)\frac{\sqrt{x}\log x}{8\pi\varepsilon}$, which seems more useful. Without Riemann's hypothesis, we may also present a similar form for $\pi(x)$.
Although these observations  seem to be interesting, we are still intent to handle the conjecture by appealing to the method associated closely with the finite group theory in this paper.

\begin{lemma}\label{iso}
Let $n \geq 6$.  If $\Gamma(A_{2n}) = \Gamma(A_{2n-1})$, then $\Gamma(A_{2n}) = \Gamma(A_{2n-2})$.
\end{lemma}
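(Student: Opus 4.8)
The plan is to unwind the hypothesis into its arithmetic content and then check separately that $\Gamma(A_{2n})$ and $\Gamma(A_{2n-2})$ have the same vertex set and the same edge set. Since $n \geq 6$ the number $2n \geq 12$ is even, hence composite, so $\pi(A_{2n}) = \pi(A_{2n-1})$ and the hypothesis $\Gamma(A_{2n}) = \Gamma(A_{2n-1})$ amounts to $\mathcal E(A_{2n}) = \mathcal E(A_{2n-1})$; by Theorem \ref{goldb3} (equivalently Corollary \ref{goldb4}) this holds precisely when $2n$ is \emph{not} a Goldbach's number, i.e.\ when $2n$ cannot be written as a sum of two distinct odd primes. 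I keep this reformulation as a standing assumption and show, under it, that $\mathcal V(A_{2n}) = \mathcal V(A_{2n-2})$ and $\mathcal E(A_{2n}) = \mathcal E(A_{2n-2})$.

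For the edges: Lemma \ref{v5} shows $\mathcal E(A_m)$ only grows with $m$, so $\mathcal E(A_{2n-2}) \subseteq \mathcal E(A_{2n-1}) = \mathcal E(A_{2n})$ and only the reverse inclusion is at issue. By Lemma \ref{v5} again, any edge lying in $\mathcal E(A_{2n}) - \mathcal E(A_{2n-2})$ is either an edge $pq$ between distinct odd primes with $2n-2 < p+q \leq 2n$, which forces $p+q = 2n$ because $p+q$ is even, or an edge $2p$ with $2n-2 < p+4 \leq 2n$, which (as $2n-4$ is composite) forces $p = 2n-5$ with $2n-5$ prime. The first case cannot occur, by the standing assumption. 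Nor can the second: if $2n-5$ were prime then, because $n \geq 6$ makes $2n-5 \geq 7$ and in particular $2n-5 \neq 5$, we would have $2n = 5 + (2n-5)$ with $5$ and $2n-5$ distinct odd primes, contradicting that $2n$ is not Goldbach. Hence $\mathcal E(A_{2n}) = \mathcal E(A_{2n-2})$.

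For the vertices: $\mathcal V(A_m) = \{\, p \text{ prime} : p \leq m \,\}$, and the only integer in the interval $(2n-2, 2n]$ other than the even number $2n$ is $2n-1$, so $\mathcal V(A_{2n})$ and $\mathcal V(A_{2n-2})$ are equal exactly when $2n-1$ is not prime. This is the one step that is not pure bookkeeping: if $2n-1$ were prime, then $2n-1 \geq 11$ because $n \geq 6$, so Theorem B --- in its assertion for $p+1$, applied with $p = 2n-1$ --- would make $2n = (2n-1)+1$ a Goldbach's number, again contradicting the standing assumption. Therefore $2n-1$ is composite and $\mathcal V(A_{2n}) = \mathcal V(A_{2n-2})$. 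Together with the previous paragraph this gives $\Gamma(A_{2n}) = \Gamma(A_{2n-2})$, as claimed.

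The only genuine obstacle is the vertex step, i.e.\ excluding the possibility that $2n-1$ is prime: the hypothesis $\Gamma(A_{2n}) = \Gamma(A_{2n-1})$ constrains the primality structure near $2n$ but says nothing directly about $2n-1$, so it cannot be handled by a counting identity in the spirit of Theorem \ref{edgeequ}, and one must invoke Theorem B (equivalently, the short number-theoretic fact that $p+1$ is a sum of two distinct odd primes whenever $p \geq 11$ is prime). Everything else reduces to the description of $\mathcal E(A_m)$ in Lemma \ref{v5} together with the trivial parity observation that an odd integer is never a sum of two odd primes.
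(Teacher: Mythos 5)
Your reduction of the hypothesis to ``$2n$ is not a Goldbach's number'' and your treatment of the edge sets are correct, and in fact cleaner than the paper's: where the paper enumerates the possible cycle types of an element of order $2p$, you read everything off the numerical edge criterion of Lemma \ref{v5}, and your terminal contradiction ($p=2n-5$ forces $2n = 5 + (2n-5)$ with $2n-5 \neq 5$) is exactly the one the paper reaches in its ``$t=1$'' subcase. The problem is the vertex step. To rule out $2n-1$ being prime you invoke Theorem B in its $p+1$ form, i.e.\ Theorem \ref{go-sp1}. But in this paper Theorem \ref{go-sp1} is \emph{deduced from} Lemma \ref{iso}: its entire argument is ``if $\Gamma(A_{p+1}) = \Gamma(A_p)$ then Lemma \ref{iso} gives $\Gamma(A_{p+1}) = \Gamma(A_{p-1})$, which is absurd because $p$ is a vertex of the first graph and not of the second.'' So the vertex-set equality in the conclusion of Lemma \ref{iso} is precisely what the proof of Theorem \ref{go-sp1} consumes, and using Theorem \ref{go-sp1} to establish that equality is circular. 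Nor is your fallback --- ``the short number-theoretic fact that $p+1$ is a sum of two distinct odd primes whenever $p \geq 11$ is prime'' --- an independently available fact: it is an instance of the Goldbach conjecture itself, it is exactly the content of Theorem \ref{go-sp1}, and the paper offers no other proof of it.

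That said, you have put your finger on a real issue. The paper's own proof of Lemma \ref{iso} verifies only the edge containment $\mathcal{E}(A_{2n}) \subseteq \mathcal{E}(A_{2n-2})$ and never addresses $\mathcal{V}(A_{2n}) \subseteq \mathcal{V}(A_{2n-2})$, which fails exactly when $2n-1$ is prime. Since ``$2n$ not Goldbach $\Rightarrow$ $2n-1$ not prime'' is the contrapositive of Theorem \ref{go-sp1}, the vertex half of the lemma cannot be settled by bookkeeping: any honest proof of it proves Theorem \ref{go-sp1} en route. A non-circular repair would have to either weaken the lemma to a statement about edge sets only (which is all the paper actually establishes, and which would then require rewriting the proof of Theorem \ref{go-sp1}, whose contradiction currently comes from the vertex $p$ alone) or supply an independent argument that $2n-1$ cannot be prime when $2n$ is not a Goldbach's number. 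As written, your proof correctly isolates the genuine obstacle but does not overcome it.
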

\begin{proof}
It is evident that $\Gamma(A_{2n-2}) \leq \Gamma(A_{2n})$. Under the hypothesis above, we shall prove  $\Gamma(A_{2n}) \leq \Gamma(A_{2n-2})$.

 Assume that $A_{2n}$ contains an element of odd order $pq$. Since $\Gamma(A_{2n}) = \Gamma(A_{2n-1})$, it follows that $A_{2n - 1}$ also contains an element of odd order $pq$, then  $p + q \leq 2n - 1$ by Lemma \ref{v5}, consequently $p + q \leq 2n - 2$ (as the sum $p + q$ is even), thus we conclude that $A_{2n - 2}$ has an element of odd order $pq$.

Assume now that $A_{2n}$ has an element $x$ of order $2p$. Then the product expression of disjoint cycles of $x$ contains at most some $2$-cycles, $p$-cycles or $2p$-cycles. If its expression has all cycles of three types, then $2 + p + 2p \leq 2n$ and so $4 + p \leq 2n - 2$ (as $p \geq 3$), Lemma \ref{v5} implies that $A_{2n-2}$ owns elements of order $2p$. Because a single even cycle is an odd permutation, it follows that if the expression exactly contains one type of cycles, then the only possibility is of $2p$-cycles, we get $2(2p) \leq 2n$ and so $4 + p \leq 2n-2$, as wanted. Therefore we are reduced to the case where the expression precisely contains two types of cycles.

If $x$ is a product of some $2$-cycles and $p$-cycles, then the number of $2$-cycle factors in the product expression of disjoint cycles of $x$ is even number, say $2t$. When $t \geq 2$, we know $4 + p \leq 2n - 2$ (as $2t\cdot 2 + p \leq 2n$). When $t = 1$, we see that the odd number $4 + p \leq 2n - 1$. If $A_{2n - 2}$ contains no element of order $2p$, then $4 + p > 2n - 2$ (again by Lemma \ref{v5}), hence $4 + p = 2n - 1$, that is, $2n = 5 + p$. Since $n \geq 6$, we get that $p \neq 5$, and thus $\Gamma(A_{2n})$ has edge $5p$ but $\Gamma(A_{2n-1})$ has not, hence $\Gamma(A_{2n-1}) < \Gamma(A_{2n})$, a contradiction. Hence $A_{2n-2}$ contains elements of order $2p$, as desired.
If $x$ is a product of some $2$-cycles and $2p$-cycles, then we obtain that $2 + 2p \leq 2n$ and so $ 4 + p \leq 2n - 1$, thus the same argument as the preceding paragraph yields the desired result.

If $x$ is a product of some $p$-cycles and $2p$-cycles, then $p + 2(2p) \leq 2n$ and so $4 + p \leq 2n - 2$, as required. The proof is finished.
\end{proof}

The following is the former part of \textsc{Theorem B}.

\begin{theorem}\label{go-sp1}
Let $p \geq 7 $ be a prime, then $p+1$ is a Goldbach's number.
\end{theorem}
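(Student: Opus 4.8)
The plan is to reduce the statement to a single prime-graph inclusion and then attack that with Bertrand's postulate. Since $p\ge 7$ is an odd prime, $N:=p+1$ is even with $N\ge 8$; writing $N=2\cdot\frac{p+1}{2}$ with $\frac{p+1}{2}<p$ shows that every prime divisor of $N$ is smaller than $p$, so $\pi(A_N)=\pi(A_p)$, and by Lemma~\ref{v5} the edges of type $2s$ in $\Gamma(A_N)$ and in $\Gamma(A_p)$ coincide (they could differ only at $s=p-3$, which is even and hence not prime). Therefore, by Theorem~\ref{goldb3} (equivalently by Corollary~\ref{goldb4} together with Theorem~\ref{edgeequ}), the claim that $p+1$ is a Goldbach's number is equivalent to producing one pair of distinct odd primes $q,r$ with $q+r=p+1$; past this reduction the argument is purely number-theoretic.

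First I would apply Bertrand's postulate — or the familiar refinement that the interval $(m,2m)$ contains at least two primes as soon as $m\ge 6$ — with $m=\frac{p+1}{2}$. This furnishes a prime $q$ with $\frac{p+1}{2}<q<p+1$, and since $p$ itself lies in that interval the two-primes form allows one to take $q\ne p$, so in fact $\frac{p+1}{2}<q\le p-2$. Set $r:=p+1-q$. Then $r$ is odd (even minus odd) and $3\le r<\frac{p+1}{2}<q$, so $q$ and $r$ are distinct odd numbers with $q+r=p+1$, and the degenerate possibility $q=r$ is automatically ruled out. The finitely many primes below the threshold of the two-primes estimate — and, more generously, every prime $p$ with $p+1\le 4\times 10^{18}$ — are covered directly by the numerical verification recorded in \cite{gold}.

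The step I expect to be the real obstacle is showing that the complement $r=p+1-q$ can be taken prime. For a single choice of $q$ this may fail, and even letting $q$ range over all primes in $\bigl(\tfrac{p+1}{2},\,p-2\bigr]$ there is no elementary mechanism that forces $p+1-q$ to be prime for at least one of them: that is precisely the strongly binary Goldbach conjecture for the number $p+1$, that is, the demand that the quantity $\sum_{3\le s<(p+1)/2}\bigl(\pi(p+1-s)-\pi(p-s)\bigr)$ appearing in Theorem~\ref{edgeequ} be positive. Overcoming it seems to require genuinely stronger input — a lower bound for the number of Goldbach representations of $p+1$ (the unconditional sieve- or circle-method bounds of this shape are known only for almost all even numbers, not for a prescribed one), or some extra arithmetic leverage drawn from the primality of $p$, or a split into a numerically verified initial range and an asymptotic regime. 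Locating such leverage is, to my mind, the crux; short of it the approach above stalls exactly at the assertion that some $p+1-q$ is prime.
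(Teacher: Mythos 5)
Your reduction is sound as far as it goes, but it ends exactly where the theorem begins: after correctly observing that the $2s$-edges of $\Gamma(A_{p+1})$ and $\Gamma(A_p)$ coincide and that the claim is equivalent to writing $p+1$ as a sum of two distinct odd primes, you attempt to produce such a representation by Bertrand-type arguments and, as you yourself concede, stall at the assertion that $p+1-q$ is prime for some prime $q$ in the upper half of the interval. That final step is the entire content of the statement, so the proposal does not constitute a proof.

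The leverage you were looking for --- ``some extra arithmetic leverage drawn from the primality of $p$'' --- is exactly what the paper exploits, and it is graph-theoretic rather than analytic. The key tool is Lemma~\ref{iso}: if $\Gamma(A_{2n})=\Gamma(A_{2n-1})$ then $\Gamma(A_{2n})=\Gamma(A_{2n-2})$ (proved by a case analysis of cycle types, using that for odd primes $q,r$ the condition $q+r\le 2n-1$ already forces $q+r\le 2n-2$ by parity, and handling the $2q$-edges separately). Now take $2n=p+1$ and suppose for contradiction that $\Gamma(A_{p+1})=\Gamma(A_p)$. The lemma gives $\Gamma(A_{p+1})=\Gamma(A_{p-1})$, which is absurd because $p$ is a vertex of $\Gamma(A_{p+1})$ but $p\nmid\abs{A_{p-1}}=(p-1)!/2$, so $p$ is not a vertex of $\Gamma(A_{p-1})$. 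This is the only place the primality of $p$ enters, and it is decisive. Hence $\Gamma(A_p)<\Gamma(A_{p+1})$, and Theorem~\ref{goldb3} converts that strict inclusion into the desired representation $p+1=q+r$. Note that the argument never exhibits the primes $q,r$ explicitly; it only shows an edge must appear when passing from $A_p$ to $A_{p+1}$. Your approach, by contrast, tries to construct the representation directly, which is why it collapses back into the unproved conjecture.
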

\begin{proof}
It is evident that $ 8 = 5 + 3$, thus we may assume $p \geq 11$.
If $\Gamma(A_{p+1}) = \Gamma(A_p)$, then Lemma \ref{iso} yields $\Gamma(A_{p+1}) = \Gamma(A_{p-1})$. However, this is impossible since $\Gamma(A_{p+1})$ has vertex $p$, which is not a vertex of $\Gamma(A_{p-1})$. Thus we obtain that $\Gamma(A_{p+1}) > \Gamma(A_p)$, then Lemma \ref{v5} yields that $p+1$ is just a Goldbach's number, as desired.
\end{proof}

\begin{theorem}\label{Goldb5}
Let $2n \geq 8$.  If $\Gamma(A_{2n}) < \Gamma(A_{2n+1})$, then $2n = p + 3$ for some odd prime $p > 3$.
\end{theorem}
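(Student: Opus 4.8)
The plan is to use Lemma~\ref{v5} to read off exactly which vertices and edges can lie in $\Gamma(A_{2n+1})$ but not in $\Gamma(A_{2n})$. Since $A_{2n}\le A_{2n+1}$, we have $\mathcal{V}(A_{2n})\subseteq\mathcal{V}(A_{2n+1})$ and $\mathcal{E}(A_{2n})\subseteq\mathcal{E}(A_{2n+1})$, so the hypothesis $\Gamma(A_{2n})<\Gamma(A_{2n+1})$ means that at least one of these two inclusions is strict. I would split into the edge case and the vertex case, the former carrying the real content.

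\emph{The edge case.} Suppose some edge $e$ lies in $\mathcal{E}(A_{2n+1})\setminus\mathcal{E}(A_{2n})$. If $e=pq$ for distinct odd primes $p,q$, then Lemma~\ref{v5}(1) gives both $p+q\le 2n+1$ (as $e\in\mathcal{E}(A_{2n+1})$) and $p+q>2n$ (as $e\notin\mathcal{E}(A_{2n})$), whence $p+q=2n+1$; but $p+q$ is even, a contradiction. Hence $e=2q$ for an odd prime $q$, and Lemma~\ref{v5}(2) forces $q+4\le 2n+1$ together with $q+4>2n$, i.e.\ $q+4=2n+1$, so $q=2n-3$. Since $2n\ge 8$, $q=2n-3\ge 5$ is an odd prime greater than $3$ and $2n=q+3$, which is the conclusion. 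The one real step here is the parity observation $p+q\ne 2n+1$, which eliminates every possible new edge between odd primes, leaving only $2\cdot(2n-3)$.

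\emph{The vertex case, and the expected obstacle.} It remains to deal with the case where the edge sets coincide but $\mathcal{V}(A_{2n})\subsetneq\mathcal{V}(A_{2n+1})$. A prime $r\in\pi(A_{2n+1})\setminus\pi(A_{2n})$ must divide $(2n+1)!/2$ but not $(2n)!/2$, which for $2n\ge 8$ forces $r=2n+1$ to be prime; and by Lemma~\ref{v5} no edge of $\Gamma(A_{2n+1})$ is incident to this vertex, so it is isolated. The delicate point — which I expect to be the main obstacle — is to show that this ``new-vertex-only'' situation cannot by itself account for the properness, i.e.\ that whenever $2n+1$ is prime one can still exhibit a new edge (equivalently, that $2n-3$ is then also prime), thereby reducing to the edge case above. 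An alternative, cleaner route is to read the hypothesis at the level of edge sets, $\mathcal{E}(A_{2n})\subsetneq\mathcal{E}(A_{2n+1})$, under which the edge-case argument alone completes the proof; I would make sure that whichever reading is adopted here is the one actually used when the proposition is invoked, in particular in the $p-1$ half of Theorem~B, where $2n+1$ is itself the prime $p$.
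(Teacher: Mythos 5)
Your edge-case argument is, almost word for word, the paper's entire proof: the paper notes that a new odd--odd edge $pq$ would force $p+q=2n+1$, impossible by parity, and then simply asserts that ``there exists some edge $2p\in\mathcal{E}(A_{2n+1})$ but not in $\mathcal{E}(A_{2n})$,'' from which Lemma \ref{v5} gives $p+4=2n+1$, i.e.\ $2n=p+3$ with $p=2n-3>3$. On that branch you and the paper coincide exactly.

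The obstacle you flag in the vertex case is not a defect of your attempt but a genuine gap in the paper. Under the paper's own definition, $\Gamma(G_1)<\Gamma(G_2)$ already holds when only the vertex sets differ, and the proof of Theorem \ref{Goldb5} never excludes the possibility that the properness of $\Gamma(A_{2n})<\Gamma(A_{2n+1})$ is witnessed solely by the new isolated vertex $2n+1$ (when $2n+1$ is prime). In that situation the stated conclusion can fail: for $2n=12$ the prime $13$ gives $\Gamma(A_{12})<\Gamma(A_{13})$, yet $12=9+3$ and $9$ is not prime, so no odd prime $p>3$ satisfies $12=p+3$ (although $12=5+7$ is of course a Goldbach number). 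The reduction you ask for --- that a new vertex forces a new edge, equivalently that $2n-3$ is prime whenever $2n+1$ is --- is therefore false in general. This matters because the theorem is invoked in exactly the vertex-only form in the proof of Theorem \ref{go-sp2}, where $\Gamma(A_{p-1})<\Gamma(A_p)$ is deduced from the new vertex $p$ alone. Your proposed repair, reading the hypothesis as $\mathcal{E}(A_{2n})\subsetneq\mathcal{E}(A_{2n+1})$, makes the statement and the edge-case proof correct, but it would then no longer support the paper's derivation of the $p-1$ half of Theorem B; so the issue you isolated is a real one in the paper rather than in your argument.
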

\begin{proof}
  For the distinct odd primes $p, q$, if the edge $pq \in \mathcal{E}(A_{2n+1})$, then $p + q \leq 2n+1$, and so $p+q \leq 2n$, Lemma \ref{v5} yields the edge $pq \in \mathcal{E}(A_{2n})$. Thus there exists some edge $2p \in \mathcal{E}(A_{2n+1})$ but not in $\mathcal{E}(A_{2n})$. Lemma \ref{v5} shows that $4 + p \leq 2n + 1$ but $4+p > 2n$, which forces $2n+1 = p+4$, and so $2n = p + 3$. Also $2n \geq 8$ and so $p > 3$, it follows that $2n$ is a Goldbach's number, as desired.
\end{proof}

The following is the latter part of \textsc{Theorem B}.

\begin{theorem}\label{go-sp2}
Let $p \geq 11 $ be a prime, then $p-1$ is a Goldbach's number.
\end{theorem}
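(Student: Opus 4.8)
The plan is to write $p-1$ as an even integer $2n$ and to deduce the claim from Theorem \ref{Goldb5} applied to the pair $A_{2n}$, $A_{2n+1}$. Put $2n = p-1$; since $p \geq 11$ is a prime this is an even number with $2n \geq 10 \geq 8$, and $2n+1 = p$. It therefore suffices to show that $\Gamma(A_{2n})$ is a proper subgraph of $\Gamma(A_{2n+1})$, for then Theorem \ref{Goldb5} immediately gives that $2n = p-1$ is a Goldbach's number.

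To verify this strict containment I would first record the general fact that $\Gamma(A_m) \leq \Gamma(A_{m+1})$ for every $m$: one has $\mathcal{V}(A_m) = \pi(A_m) \subseteq \pi(A_{m+1}) = \mathcal{V}(A_{m+1})$, and any element of $A_m$ of order $rs$, regarded as a permutation of $\{1, 2, \cdots, m+1\}$ fixing the point $m+1$, is still even and hence lies in $A_{m+1}$, so $\mathcal{E}(A_m) \subseteq \mathcal{E}(A_{m+1})$. In particular $\Gamma(A_{2n}) \leq \Gamma(A_{2n+1})$. The containment is proper because $p = 2n+1$ is prime, so $p \in \mathcal{V}(A_{2n+1})$, whereas $p \notin \mathcal{V}(A_{2n})$ since $p > 2n = p-1$; thus $\mathcal{V}(A_{2n})$ is a proper subset of $\mathcal{V}(A_{2n+1})$, and therefore $\Gamma(A_{2n}) < \Gamma(A_{2n+1})$.

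Now Theorem \ref{Goldb5} applies, its hypothesis $2n \geq 8$ being satisfied, and it yields $2n = r + 3$ for some odd prime $r > 3$. Since $3$ and $r$ are then distinct odd primes, $p - 1 = 2n = 3 + r$ exhibits $p-1$ as a sum of two distinct odd primes, so $p-1$ is a Goldbach's number, as desired. The argument is essentially immediate once Theorem \ref{Goldb5} is available; the only point that genuinely requires attention is confirming its hypothesis, and the sole substantive observation there is that the prime $p$ occurs as a vertex of $\Gamma(A_{2n+1})$ but not of $\Gamma(A_{2n})$, which is exactly what forces the prime graph to grow strictly at this step. Hence I do not anticipate any real obstacle beyond this bookkeeping.
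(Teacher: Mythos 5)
Your proposal is correct and follows essentially the same route as the paper: both observe that $p$ is a vertex of $\Gamma(A_p)=\Gamma(A_{2n+1})$ but not of $\Gamma(A_{p-1})=\Gamma(A_{2n})$, conclude $\Gamma(A_{2n})<\Gamma(A_{2n+1})$, and invoke Theorem \ref{Goldb5}. The only difference is that you spell out the containment $\Gamma(A_{2n})\leq\Gamma(A_{2n+1})$ explicitly, which the paper leaves implicit.
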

\begin{proof}
Since  $p \in \mathcal{V}(A_p)$ but not in $\mathcal{V}(A_{p-1})$,  it follows that $\Gamma(A_{p-1}) < \Gamma(A_{p})$, then Theorem \ref{Goldb5} yields that $p-1$ is a Goldbach's number, as wanted.
\end{proof}

\begin{proposition}\label{pr-di}
It is true that $\pi(x) - \pi(6x/7) \geq 1$ for $x \geq 37$.
\end{proposition}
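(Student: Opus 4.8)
The plan is to split the range $x \ge 37$ into a finite initial segment, handled by direct enumeration of primes, and an asymptotic range, handled by effective estimates for the prime-counting function of the same Rosser--Schoenfeld/Dusart type already invoked in the discussion following Theorem~\ref{edgeequ}.

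For the asymptotic range, fix a threshold $X_0$ (I expect $X_0$ of the order of a few hundred to work) and, for $x \ge X_0$, combine a lower bound $\pi(x) > \frac{x}{\log x}\bigl(1 + \frac{1}{\log x}\bigr)$ valid for $x \ge X_0$ with the universal upper bound $\pi(y) < \frac{y}{\log y}\bigl(1 + \frac{1.2762}{\log y}\bigr)$ applied at $y = 6x/7$, obtaining
$$\pi(x) - \pi(6x/7) \;>\; \frac{x}{\log x}\Bigl(1 + \frac{1}{\log x}\Bigr) - \frac{6x/7}{\log(6x/7)}\Bigl(1 + \frac{1.2762}{\log(6x/7)}\Bigr).$$
Writing $L = \log x$ and $c = \log(7/6)$, so that $\log(6x/7) = L - c$, the right-hand side equals $x$ times $\frac1L - \frac{6/7}{L-c} + \frac{1}{L^2} - \frac{(6/7)(1.2762)}{(L-c)^2}$. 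Since $1 - \frac67 = \frac17$, the first two terms combine to $\frac{L/7 - c}{L(L-c)}$, which is positive for $L > 7c$ and of size roughly $\frac{1}{7L}$, comfortably dominating the $O(1/L^2)$ negative remainder; clearing denominators reduces the desired positivity to a cubic inequality in $L$ with positive leading coefficient $\frac17$, hence valid for all $L$ beyond an explicit constant, and one checks that $\log X_0$ exceeds that constant. Since $\pi(x) - \pi(6x/7)$ is a nonnegative integer, strict positivity of the displayed quantity already yields $\pi(x) - \pi(6x/7) \ge 1$ for all $x \ge X_0$.

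For $37 \le x < X_0$, observe that $x \mapsto \pi(x) - \pi(6x/7)$ is piecewise constant: it increases by $1$ each time $x$ crosses a prime and decreases by $1$ each time $x$ crosses a number $7p/6$ with $p$ prime (the two kinds of breakpoint never coincide, since $6p = 7q$ forces $p = 7$, $q = 6$). Hence it suffices to evaluate the function on each of the finitely many constant pieces inside $[37, X_0)$, a routine computation from the list of primes below $\frac67 X_0$ (readily cross-checked in GAP, cf. the appendix). The minimum value is $1$, attained throughout the first piece $[37, 41)$ --- where $\pi(x) = 12$ and $\pi(6x/7) = \pi(222/7) = 11$ --- so both the constant $6/7$ and the bound $x \ge 37$ are sharp.

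The main obstacle is the asymptotic step. The ratio $x : (6x/7) = 7 : 6$ is far tighter than in Bertrand's postulate ($2:1$) or Nagura's theorem ($6:5$), so no soft counting argument will suffice: the second-order term $\frac{1}{(\log x)^2}$ of the lower bound for $\pi$ is genuinely needed to overcome the leading constant ($1.2762$, or $3/2$) of the upper bound, and $X_0$ must be kept small enough that the residual numerical verification remains feasible by hand or machine. A fallback that avoids two-sided $\pi$-estimates is Dusart's interval result, that $(x,\, x(1 + (2\log^2 x)^{-1})]$ contains a prime for $x \ge 3275$: this forces a prime into $(6x/7, x]$ as soon as $6x/7 \ge 3275$ and $\log(6x/7) \ge \sqrt{3}$, i.e. for $x \ge 3821$, at the price of enlarging the finite check to $37 \le x < 3821$.
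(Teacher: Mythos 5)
Your proposal is correct in outline, but it is worth knowing that the paper does not actually prove this proposition at all: its entire ``proof'' is the citation \emph{See Theorem 2 of \cite{Ross}}, i.e.\ Rosser--Schoenfeld's two-sided Chebyshev bounds $\frac{x}{\log x}\bigl(1+\frac{1}{2\log x}\bigr)<\pi(x)$ for $x\ge 59$ and $\pi(x)<\frac{x}{\log x}\bigl(1+\frac{3}{2\log x}\bigr)$ for $x>1$. What you have written is essentially the deduction that such a citation leaves implicit, carried out with the sharper Dusart-type constants ($1$ and $1.2762$ in place of $\tfrac12$ and $\tfrac32$); your reduction of positivity to the cubic $\tfrac17 L^3+O(L^2)$ in $L=\log x$ is sound, the breakpoint analysis on the finite segment is correct (including the observation that $6p=7q$ has no prime solution), and the sharpness remark at $x=37$ checks out ($\pi(37)=12$, $\pi(222/7)=11$). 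Three small points to tighten: (i) state the validity threshold of the lower bound explicitly --- the $1+\frac{1}{\log x}$ form holds for $x\ge 599$, so $X_0=599$ and the finite verification runs over $37\le x<599$, which must actually be performed rather than asserted; (ii) confirm that the real root of your cubic lies below $\log 599\approx 6.4$ (it does, by a wide margin --- the crossover is near $L\approx 2.7$); (iii) if you prefer to match the paper's reference exactly, the same computation with Rosser--Schoenfeld's weaker constants still closes, since the second-order deficit $\frac{1}{2L^2}-\frac{9/7}{(L-c)^2}$ is still dominated by the first-order gain $\approx\frac{1}{7L}$ once $L$ is moderately large, at the cost of a somewhat longer finite check. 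Your fallback via Dusart's prime-gap result is a legitimate alternative that trades the two-sided estimate for a larger enumeration range; either version is a complete argument, whereas the paper supplies none.
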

\begin{proof}
See Theorem 2 of \cite{Ross}.
\end{proof}

The next consequence shows that there are infinitely many Goldbach's numbers.

\begin{corollary}
For each $n \geq 5$, there exists at least two Goldbach's numbers $2m-2, 2m$ satisfying $\frac{12n-7}{7} < 2m-2, 2m \leq 2n$.
\end{corollary}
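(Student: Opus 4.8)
The plan is to exhibit the required pair of consecutive even Goldbach numbers in the shape $p-1,\,p+1$ for a suitably located prime $p$. Recall that Theorem~\ref{go-sp1} gives that $p+1$ is a Goldbach number whenever the prime $p\geq 7$, and Theorem~\ref{go-sp2} gives that $p-1$ is a Goldbach number whenever the prime $p\geq 11$. So it suffices to find, for each $n\geq 5$, a prime $p\geq 11$ with $\frac{12n}{7}<p\leq 2n-1$: then, setting $2m=p+1$, both $2m=p+1$ and $2m-2=p-1$ are Goldbach numbers, and moreover $2m\leq 2n$ while $2m-2=p-1>\frac{12n}{7}-1=\frac{12n-7}{7}$, so both $2m-2$ and $2m$ lie in the interval $\big(\frac{12n-7}{7},\,2n\big]$.

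First I would handle the range $n\geq 19$. Then $2n\geq 38\geq 37$, so Proposition~\ref{pr-di} applied with $x=2n$ gives $\pi(2n)-\pi\big(\frac{12n}{7}\big)\geq 1$; hence there is a prime $p$ with $\frac{12n}{7}<p\leq 2n$. Since $2n$ is even and at least $8$, it is composite, so in fact $p\leq 2n-1$; and since $p>\frac{12n}{7}\geq\frac{228}{7}>11$ we automatically have $p\geq 11$. Thus $p$ meets both conditions, and the reduction in the opening paragraph completes this range.

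For the finitely many remaining values $5\leq n\leq 18$ I would instead simply take the pair $(2n-2,\,2n)$. A one-line computation shows $2n-2>\frac{12n-7}{7}$ exactly when $2n>7$, hence for all $n\geq 4$, so both $2n-2$ and $2n$ lie in $\big(\frac{12n-7}{7},\,2n\big]$; moreover $2n-2\geq 8$ because $n\geq 5$. Finally, every even integer $2k$ with $8\leq 2k\leq 36$ is visibly a Goldbach number (for instance $8=3+5$, $10=3+7$, $\ldots$, $34=11+23$, $36=17+19$), so both $2n-2$ and $2n$ qualify, finishing this range.

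I do not anticipate a genuine obstacle, as the statement is essentially a bookkeeping consequence of Proposition~\ref{pr-di} together with Theorems~\ref{go-sp1}--\ref{go-sp2}. The only points that need a little care are checking that the prime furnished by Proposition~\ref{pr-di} is automatically at least $11$ (so that Theorem~\ref{go-sp2} applies) and is strictly less than $2n$ (so that $p+1\leq 2n$), along with the routine finite verification covering $5\leq n\leq 18$.
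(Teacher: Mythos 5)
Your proof is correct and follows essentially the same route as the paper: for $n\geq 19$ it invokes Proposition~\ref{pr-di} to locate a prime $p$ in $\bigl(\tfrac{12n}{7},2n\bigr)$ and then applies Theorems~\ref{go-sp1} and~\ref{go-sp2} to $p+1$ and $p-1$, while the range $5\leq n\leq 18$ is settled by a finite check. Your version merely makes explicit two details the paper leaves implicit (that the prime furnished is automatically $\geq 11$ and $<2n$, and a concrete choice of pair for the small cases).
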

\begin{proof}
By Proposition \ref{pr-di}, there is a prime $p$ with $\frac{12n}{7} < p < 2n$ for $n \geq 19$. Applying Theorems \ref{go-sp1} and \ref{go-sp2},  we get that $p-1, p+1$ are Goldbach's numbers and $ \frac{12n-7}{7} < p-1, p+1 \leq 2n$, we may take $2m - 2 = p-1$ and $2m = p+1$. For $5 \leq n \leq 18$, it is routine to check that there exist Goldbach's numbers $2m-2, 2m$ satisfying  $ \frac{12n-7}{7} < 2m-2, 2m \leq 2n$, as claimed.
\end{proof}

The following result reduces  the Strongly Binary Goldbach's conjecture to the situation where both graphs  $\Gamma(A_{2n})$ and $\Gamma(A_{2n-1})$ connected, which is Part 4 of \textsc{Theorem A}.
\begin{theorem}\label{reduc}
Let $n \geq 4$, the graph $\Gamma(A_{2n-1})$ or $\Gamma(A_{2n})$ is disconnected, then $2n$ is a Goldbach's number.
\end{theorem}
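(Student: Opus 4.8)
The plan is to use Lemma~\ref{v5} to translate the hypothesis that one of $\Gamma(A_{2n-1})$, $\Gamma(A_{2n})$ is disconnected into the elementary statement that $2n-1$ or $2n-3$ is a prime, and then to conclude via Theorem~\ref{go-sp1} together with the definition of a Goldbach number.

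The core step is to describe, for $m \geq 8$, exactly when $\Gamma(A_m)$ is disconnected. Here the vertex set is $\mathcal{V}(A_m) = \pi(A_m) = \{\,p : p \text{ prime},\ p \leq m\,\}$. By Lemma~\ref{v5}, the vertex $2$ is adjacent to $3$ (since $3+4 = 7 \leq m$), and $3$ is adjacent to every odd prime $p$ with $p \leq m-3$ (since $3+p \leq m$); hence the connected component of $\Gamma(A_m)$ containing $3$ contains the whole set $\{2\} \cup \{\, p : p \text{ an odd prime},\ p \leq m-3 \,\}$, and in particular has at least two vertices. Conversely, if $p$ is an odd prime with $m-2 \leq p \leq m$, then Lemma~\ref{v5} shows $p$ is not adjacent to $2$ (because $p+4 > m$) and not adjacent to any odd prime $q$ (because $q \geq 3 > m-p$), so such a $p$ is an isolated vertex; moreover every prime in the interval $[m-2,m]$ is odd once $m \geq 8$. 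Putting these facts together: $\Gamma(A_m)$ is disconnected if and only if there is a prime in $[m-2,m]$, and this happens precisely when a ``large'' odd prime sits among the vertices, isolated from the main component.

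I would then apply this characterization with $m = 2n$ and with $m = 2n-1$. For $n = 4$ there is nothing to prove, since $8 = 3+5$; so assume $n \geq 5$, in which case $2n-1 \geq 9$ and $2n \geq 10$ are both at least $8$, so Lemma~\ref{v5} is available for both degrees. If $\Gamma(A_{2n})$ is disconnected there is a prime in $\{2n-2,2n-1,2n\}$, and if $\Gamma(A_{2n-1})$ is disconnected there is a prime in $\{2n-3,2n-2,2n-1\}$; since $2n$ and $2n-2$ are even and $\geq 6$, hence composite, in both cases we obtain that $2n-1$ or $2n-3$ is prime. If $2n-1$ is prime, then $2n-1 \geq 7$, so Theorem~\ref{go-sp1} applied to the prime $2n-1$ gives that $2n = (2n-1)+1$ is a Goldbach number. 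If instead $2n-3$ is prime, then $2n-3 \geq 7 > 3$, so $3$ and $2n-3$ are distinct odd primes with $2n = 3 + (2n-3)$, i.e., $2n$ is a Goldbach number directly from the definition. This completes the argument.

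I do not expect a genuine obstacle: once Lemma~\ref{v5} and Theorem~\ref{go-sp1} are in hand, the statement is essentially a corollary. The points that need care are the verification in the second paragraph that the ``large'' odd primes are the \emph{only} possible isolated vertices --- so that disconnectedness cannot arise by any other splitting of the main component --- and the low-degree bookkeeping, in particular the case $n=4$, where $A_7$ falls outside the hypothesis of Lemma~\ref{v5} but the conclusion holds trivially.
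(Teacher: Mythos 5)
Your proof is correct, and it takes a genuinely different route from the paper's. The paper proves this theorem by quoting Williams's classification of the prime graph components of alternating groups (Theorem 1 and Tables Ib, Id of the cited work of Williams): it rules out three components for $\Gamma(A_{2n})$, reads off from the tables that two components forces $2n=p+1$ for an odd prime $p$ (whence Theorem~\ref{go-sp1} applies), and handles the remaining case ($\Gamma(A_{2n})$ connected but $\Gamma(A_{2n-1})$ not) by observing that the two graphs then differ, so Theorem~\ref{goldb3} applies. You instead derive the disconnectedness criterion from scratch out of Lemma~\ref{v5}: the component of $3$ swallows $2$ and every odd prime $p\leq m-3$, and the only other possible vertices are isolated primes in $[m-2,m]$, so $\Gamma(A_m)$ ($m\geq 8$) is disconnected exactly when such a prime exists; specializing to $m=2n$ and $m=2n-1$ reduces everything to ``$2n-1$ is prime'' (use Theorem~\ref{go-sp1}) or ``$2n-3$ is prime'' (then $2n=3+(2n-3)$ directly). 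Your version is self-contained where the paper's is not, and as a bonus it transparently covers the possibility that $\Gamma(A_{2n-1})$ has \emph{three} components (e.g.\ when $2n-1$ and $2n-3$ are twin primes), a case the paper's explicit case division does not mention, though its Theorem~\ref{goldb3} argument would still cover it. The price is the short component analysis in your second paragraph and the separate check at $n=4$, where $A_7$ lies outside the stated hypothesis of Lemma~\ref{v5}; both are handled correctly.
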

\begin{proof}
By Theorem 1 of \cite{W22}, the element order prime graphs of alternating groups on five or more symbols have at most three components. Table Id of \cite{W22} implies that $\Gamma(A_{2n})$ can not have three components. If  $\Gamma(A_{2n})$ has two components, then Table Ib of \cite{W22} implies $2n = p + 1$ for odd prime $p$, the result follows from Theorem \ref{go-sp1}.
If $\Gamma(A_{2n})$ has one component and $\Gamma(A_{2n-1})$ has two components, then the application of Theorem \ref{goldb3} yields the result.
\end{proof}

\begin{theorem}\label{smanum}
It is valid that $1 \leq \abs{\mathcal{E}(A_{2n}) - \mathcal{E}(A_{2n-1})}\leq 6$  when $4 \leq n \leq 30$.
\end{theorem}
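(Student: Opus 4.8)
The plan is to translate the statement into a purely arithmetic one and then dispose of it by a short finite check. By Theorem \ref{edgeequ} we have
$$\abs{\mathcal{E}(A_{2n}) - \mathcal{E}(A_{2n-1})} = \sum_{3 \le p < n}\bigl(\pi(2n-p) - \pi(2n-1-p)\bigr),$$
and, as observed in the proof of that theorem, each summand is $0$ or $1$, being equal to $1$ precisely when $2n-p$ is a prime; since $p < n$ forces $2n-p > n > p$, the nonzero terms are in bijection with the expressions $2n = p + q$ with $p, q$ distinct odd primes. Hence $\abs{\mathcal{E}(A_{2n}) - \mathcal{E}(A_{2n-1})}$ is exactly the number $r(2n)$ of such Goldbach expressions of $2n$, and it suffices to verify $1 \le r(2n) \le 6$ for $2n \in \{8, 10, \dots, 60\}$.

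For the lower bound, one exhibits for each $n$ in the range a single expression $2n = p+q$: the cases $2n = p \pm 1$ with $p$ prime are already covered by Theorems \ref{go-sp1} and \ref{go-sp2}, and the finitely many remaining even numbers are handled by inspection (for instance $8 = 3+5$, $12 = 5+7$, $18 = 5+13$, $24 = 5+19$, and so on). Alternatively this half is immediate from Corollary \ref{goldb4} together with the known status of the conjecture for small even numbers.

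For the upper bound, note that for $n \le 30$ the primes $p$ occurring in the sum satisfy $3 \le p \le 29$, so there are at most $\pi(29) - 1 = 9$ candidate terms; imposing the further condition that $2n-p$ be prime cuts this down, and running through the $27$ values $n = 4, \dots, 30$ one finds $r(2n) \le 5$ in every case except $2n = 60$, where
$$60 = 7+53 = 13+47 = 17+43 = 19+41 = 23+37 = 29+31$$
gives $r(60) = 6$. This is a bounded computation that can also be confirmed with GAP as in the appendix.

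The argument involves no genuine obstacle: once the identification with $r(2n)$ is in place, the result is a finite verification, and the maximum value $6$ is simply read off at $2n=60$. The only points requiring care have already been settled earlier in the paper — that edges through the vertex $2$ contribute nothing to $\mathcal{E}(A_{2n}) - \mathcal{E}(A_{2n-1})$ (from the proof of Theorem \ref{goldb3}), and that only edges $pq$ with $n < q \le 2n-3$ can occur in this difference (Lemma \ref{edgenumber}), so that the sum in Theorem \ref{edgeequ} really does capture all of $\abs{\mathcal{E}(A_{2n}) - \mathcal{E}(A_{2n-1})}$.
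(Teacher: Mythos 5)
Your proposal is correct, and it reaches the table of values by a different route than the paper does. The paper's proof is a direct machine computation: it runs GAP on $A_{2n}$ and $A_{2n-1}$ for $4 \leq n \leq 30$, extracts the element order sets, and tabulates the differences $d(n)$, with no appeal to the earlier structural results. You instead invoke Theorem \ref{edgeequ} to identify $\abs{\mathcal{E}(A_{2n}) - \mathcal{E}(A_{2n-1})}$ with the number $r(2n)$ of representations of $2n$ as a sum of two distinct odd primes, and then verify $1 \leq r(2n) \leq 6$ for $8 \leq 2n \leq 60$ by elementary arithmetic; your identification is consistent with the paper's remark after Theorem A that the edge number difference ``is exactly the number of expressions of $2n$ as sum of two distinct odd primes,'' and your extremal case $r(60) = 6$ matches the entry $d(30) = 6$ in the paper's table (the other values also agree on spot checks, e.g.\ $r(38) = 1$ against $d(19) = 1$ and $r(48) = 5$ against $d(24) = 5$). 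What your approach buys is transparency and independence from software: the bound $6$ is visibly just the Goldbach partition count of $60$, and the whole verification can be done by hand. What the paper's approach buys is that it serves as an independent group-theoretic confirmation of Theorem \ref{edgeequ} on this range, rather than a consequence of it. Both are finite checks and both are complete proofs.
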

\begin{proof}
By using GAP \cite{gap}, we may compute that the edge number differences $\abs{\mathcal{E}(A_{2n}) - \mathcal{E}(A_{2n-1})}$ when $4 \leq n \leq 30$. (For GAP command codes, see the appendix.)
Set $d(n) = \abs{\mathcal{E}(A_{2n}) - \mathcal{E}(A_{2n-1})}$, the specific results are listed in the following Table \ref{tb1}.
\begin{table}[ht]
\caption{Edge number differences $d(n)$ for $4 \leq n \leq 30$}\label{tb1}
\begin{center}
\begin{tabular}{rrrrrrrrrrrrrrr}
\hline
n&4&5&6&7&8&9&10&11&12&13&14&15&16&17\\
d(n)&1&1&1&1&2&2&2&2&3&2&2&3&2&3\\
\hline
n&18&19&20&21&22&23&24&25&26&27&28&29&30&\\
d(n)&4&1&3&4&3&3&5&4&3&5&3&3&6&\\
\hline
\end{tabular}
\end{center}
\end{table}
\end{proof}

\section{Centralizer}
We use $C_G(g)$ to denote the centralizer of $g$ in $G$, i.e. $C_G(g) = \{ x \in G \mid \, xg = gx \}$.

\begin{theorem}\label{cenver}
The even number $2n \geq 8$ is a Goldbach's number if and only if there exists an element  $g \in A_{2n-1}$ of odd prime order such that  $\pi(C_{A_{2n-1}}(g))$ is a proper subset of $\pi(C_{A_{2n}}(g))$.
\end{theorem}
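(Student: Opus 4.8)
The plan is to prove the two implications separately; the forward one comes down to exhibiting a convenient element of odd prime order and reading off its centralizers from Lemma \ref{cean}, while the converse requires analyzing the disjoint-cycle structure of an arbitrary witness. For the forward implication, suppose $2n = p + q$ with $p < q$ distinct odd primes, so that $q = 2n - p$ satisfies $n < q \leq 2n - 3$, and take $g \in A_{2n-1} \leq A_{2n}$ to be a $q$-cycle. Applying Lemma \ref{cean} inside $A_{2n}$ and inside $A_{2n-1}$ gives $C_{A_{2n}}(g) = \langle g \rangle \times A_{(q+1,\,2n)}$ and $C_{A_{2n-1}}(g) = \langle g \rangle \times A_{(q+1,\,2n-1)}$, whence $\pi(C_{A_{2n}}(g)) = \{q\} \cup \pi(A_p)$ and $\pi(C_{A_{2n-1}}(g)) = \{q\} \cup \pi(A_{p-1})$ (using $2n - q = p$). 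Since $p$ is prime, $p \in \pi(A_p)$ but $p \notin \pi(A_{p-1})$, and $p \neq q$, so $\pi(C_{A_{2n-1}}(g))$ is a proper subset of $\pi(C_{A_{2n}}(g))$. The single borderline value $2n = 8$, where $A_{2n-1} = A_7$ falls just below the degree hypothesis of Lemma \ref{cean}, is checked directly with the $5$-cycle of $A_7 \leq A_8$ and $8 = 3 + 5$.

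For the converse, let $g \in A_{2n-1}$ have odd prime order $r$ with $\pi(C_{A_{2n-1}}(g))$ a proper subset of $\pi(C_{A_{2n}}(g))$. The structural fact to exploit is that $C_{A_{2n-1}}(g) = C_{A_{2n}}(g) \cap A_{2n-1}$ is precisely the stabilizer of the symbol $2n$ inside $H := C_{A_{2n}}(g)$, so the inclusion of prime sets is automatic and it is the strictness that carries content. Write $g$ as a product of $k \geq 1$ disjoint $r$-cycles and put $m = 2n - kr$, the number of symbols fixed by $g$, one of which is $2n$; then $m \geq 1$ and $k \equiv m \pmod{2}$ because $r$ is odd and $2n$ even. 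Since $H$ is a subgroup of $(C_r \wr S_k) \times \mathrm{Sym}(F)$ with $F$ the $m$-element fixed set, and $H$ centralizes $g$, it preserves $F$; hence the $H$-orbit $\Delta$ of $2n$ lies in $F$ and $[H : C_{A_{2n-1}}(g)] = \abs{\Delta} \leq m$. Any prime $s \in \pi(H) \setminus \pi(C_{A_{2n-1}}(g))$ divides this index, so $s \leq m$; combining this with $s \nmid \abs{C_{A_{2n-1}}(g)}$ and the explicit value $\abs{C_{A_{2n-1}}(g)} = \varepsilon^{-1}\,r^{k}\,k!\,(m-1)!$ ($\varepsilon \in \{1,2\}$) in a short arithmetic comparison forces $s = m$ with $m$ prime (and $s \neq r$). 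When $k = 1$ this reads $2n = r + m$, a sum of two odd primes which are necessarily distinct, since $r = m$ would make the two prime sets coincide; hence $2n$ is a Goldbach number.

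The step I expect to be the main obstacle is the multi-cycle case $k \geq 2$, in which the analysis above only produces $2n = kr + m$ with $m$ an odd prime, together with the small exceptional configuration $k = m = 2$ (that is, $2n = 2(r+1)$), and neither is manifestly a sum of two distinct odd primes. I would attempt to remove it by proving that strict containment of the prime sets cannot occur when $k \geq 2$ at all --- equivalently, that it forces $g$ to be a single $r$-cycle with $n < r \leq 2n - 3$, exactly the range appearing in Part 5 of Theorem A --- and then conclude via the $k = 1$ case. Making this reduction rigorous, or otherwise eliminating the multi-cycle witnesses, is where the bulk of the work lies.
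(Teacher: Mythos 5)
Your forward implication is correct and is essentially the paper's own argument: the paper also takes $g$ to be a single cycle of length the larger prime $s$ and uses Lemma \ref{cean} to see that the smaller prime $t$ divides $\abs{C_{A_{2n}}(g)}$ but not $\abs{C_{A_{2n-1}}(g)}$. The converse is where you part ways with the paper, and the gap you flag at the end is genuine --- but worse, the repair you propose cannot work. Multi-cycle witnesses of strict containment really do exist: take $2n=14$ and $g=(1\,2\,3)(4\,5\,6)(7\,8\,9)\in A_{13}$. Then $C_{S_{13}}(g)\cong (C_3\wr S_3)\times S_4$ and $C_{S_{14}}(g)\cong (C_3\wr S_3)\times S_5$, so $\pi(C_{A_{13}}(g))=\{2,3\}$ while $\pi(C_{A_{14}}(g))=\{2,3,5\}$, a proper containment with $k=3$ cycles. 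Here your analysis yields only $14=3\cdot 3+5$, not a sum of two distinct odd primes, and the configuration you hoped to exclude ($k\geq 2$) is realized. So the converse for an arbitrary element $g$ of odd prime order is not established by your argument, and proving that every representation $2n=kr+m$ of this shape ($k$ odd, $r,m$ odd primes, $m>k$, $m\neq r$) forces $2n$ to be a Goldbach number is not a ``short arithmetic comparison'' --- it is a nontrivial additive statement in its own right.

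For comparison, the paper's converse takes a different route: from a prime $t\in\pi(C_{A_{2n}}(g))\setminus\pi(C_{A_{2n-1}}(g))$ it manufactures an element $gx$ of order $st$ in $A_{2n}$, argues that $A_{2n-1}$ has no element of order $st$, and then invokes Lemma \ref{v5} to force $2n=s+t$. But that argument hinges on the claim that any $z\in A_{2n-1}$ of order $st$ has $z^t$ conjugate to $g$, which is only true when every element of order $s$ in $A_{2n-1}$ shares the cycle type of $g$ --- i.e.\ when $g$ is a single $s$-cycle with $s>n$, exactly the situation of Corollary \ref{cenver2}. In the example above, $A_{13}$ does contain elements of order $15$ (e.g.\ $(1\,2\,3)(4\,5\,6\,7\,8)$), so the paper's converse breaks on the same multi-cycle witnesses as yours. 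Your instinct that the multi-cycle case is ``where the bulk of the work lies'' is therefore accurate; the clean fix is to prove the statement only for $g$ a single $p$-cycle with $n<p\leq 2n-3$ (equivalently, to prove Corollary \ref{cenver2} directly), rather than for an arbitrary element of odd prime order.
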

\begin{proof}
Set $A_{2n}$ to act on the symbol set $\Omega = \{1, 2, \cdots, 2n-1, 2n\}$; and $A_{2n-1}$ on the symbol set $\Omega_1 = \{1, 2, \cdots, 2n-1\}$.
Suppose that $2n$ is a Goldbach's number. Then $2n = s + t$ for distinct odd primes $s > t$. Pick $g = (1, 2, \cdots, s) \in A_{2n-1}$ and so $x = (s+1, s+2, \cdots, 2n) \not\in A_{2n-1}$, we have $x \in C_{A_{2n}}(g)$. If $t \in \pi(C_{A_{2n-1}}(g))$, then since $(t, s) = 1$, it follows via Lemma \ref{cean} that there is an element $(a_1, a_2, \cdots, a_t) \in C_{A_{2n-1}}(g)$ satisfying all $ s+1 \leq a_i \leq 2n-1$, which forces $t + s \leq 2n-1$, this contradiction shows $t \not\in \pi(C_{A_{2n-1}}(g))$.   Conversely, if $\pi(C_{A_{2n-1}}(g))$ is a proper subset of $\pi(C_{A_{2n}}(g))$ for some element $g$ (in $A_{2n-1}$) of order $s$, then there exists prime $t \in \pi(C_{A_{2n}}(g))- \pi(C_{A_{2n-1}}(g))$, thus $C_{A_{2n}}(g)$ contains element $x$ of order $t$, but $C_{A_{2n-1}}(g)$ contains no element of order $t$. Note that $t$ must be an odd prime. Hence $A_{2n}$ has an element, say $gx$, of order $st$. Note that $(s, t) = 1$. However, $A_{2n-1}$ does not contain any element of order $st$. If this is not the case, let $z \in A_{2n-1}$ be of order $st$, then $z^t$ is of order $s$ and conjugate to $g$, say $z^t = g^h$ for some $h \in A_{2n}$, and $z^s$ has order $t$. Thus $ C_{A_{2n-1}^h}(g^h)$ contains the element $z^s$ of order $t$. This is a contradiction since $$\pi(C_{A_{2n-1}}(g)) = \pi((C_{A_{2n-1}}(g))^h) = \pi(C_{A_{2n-1}^h}(g^h)).$$ Hence $A_{2n}$ contains element of order $st$, but not for $A_{2n-1}$, the application of Lemma \ref{v5} yields $2n = s + t$, as required.
\end{proof}

For any prime $n < p < 2n$, the Sylow $p$-subgroup $P$ of $A_{2n-1}$ is of order $p$, which is also a Sylow $p$-subgroup of $A_{2n}$, denoted by $P \in Syl_p(A_{2n})$. Hence Theorem \ref{cenver} can be expressed as the following version, which is Part 5 of \textsc{Theorem A}.
\begin{corollary}\label{cenver2}
The even number $2n \geq 8$ is a Goldbach's number if and only if there exists an odd prime $ n < p \leq 2n-3$ such that $\abs{\pi(C_{A_{2n}}(P)) - \pi(C_{A_{2n-1}}(P))} \geq 1$ for some $P \in Syl_p(A_{2n-1})$.
\end{corollary}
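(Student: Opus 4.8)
The plan is to deduce Corollary \ref{cenver2} directly from Theorem \ref{cenver}, by translating the phrase ``an element $g \in A_{2n-1}$ of odd prime order'' into ``a Sylow $p$-subgroup $P$ of $A_{2n-1}$ with $n < p \leq 2n-3$''. The bridge is the remark preceding the statement: for a prime $p$ with $n < p < 2n$ one has $2p > 2n$, so $p$ divides $(2n)!$ exactly once and every Sylow $p$-subgroup of $A_{2n}$ (and of $A_{2n-1}$, since $p \leq 2n-1$) has order $p$; such a $P$ is generated by a $p$-cycle $g$, whence $C_{A_{2n}}(P) = C_{A_{2n}}(g)$ and $C_{A_{2n-1}}(P) = C_{A_{2n-1}}(g)$. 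I would also record the trivial inclusion $C_{A_{2n-1}}(g) = A_{2n-1} \cap C_{A_{2n}}(g) \leq C_{A_{2n}}(g)$, so that $\pi(C_{A_{2n-1}}(P)) \subseteq \pi(C_{A_{2n}}(P))$ holds unconditionally; hence the inequality $\abs{\pi(C_{A_{2n}}(P)) - \pi(C_{A_{2n-1}}(P))} \geq 1$ is equivalent to $\pi(C_{A_{2n-1}}(P))$ being a proper subset of $\pi(C_{A_{2n}}(P))$, which is exactly the hypothesis appearing in Theorem \ref{cenver}. (Conjugacy of Sylow subgroups then shows the condition is independent of the chosen $P$, so ``for some $P$'' could equally be stated as ``for all $P$''.)

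For the ``if'' direction: given such a $p$ and $P = \langle g \rangle$, the element $g$ has odd prime order $p$ and lies in $A_{2n-1}$, and by the first paragraph $\pi(C_{A_{2n-1}}(g))$ is a proper subset of $\pi(C_{A_{2n}}(g))$; Theorem \ref{cenver} then yields that $2n$ is a Goldbach's number. For the ``only if'' direction: if $2n$ is a Goldbach's number, write $2n = s + t$ with distinct odd primes $s > t \geq 3$, so that $2s > s+t = 2n$ gives $s > n$ and $t \geq 3$ gives $s = 2n - t \leq 2n - 3$. The proof of Theorem \ref{cenver} produces $g = (1,2,\ldots,s) \in A_{2n-1}$ (an even permutation, since $s$ is odd, and $s \leq 2n-3 < 2n-1$) with $\pi(C_{A_{2n-1}}(g))$ properly contained in $\pi(C_{A_{2n}}(g))$. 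Since $s > n$, the cyclic group $\langle g \rangle$ is a Sylow $s$-subgroup of $A_{2n-1}$, so taking the prime $p = s$ and $P = \langle g \rangle$ gives the required inequality.

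I do not expect a genuine obstacle here: once Theorem \ref{cenver} is available, this corollary is essentially a change of language. The only points requiring care are bookkeeping: confirming that $n < p \leq 2n-3$ is precisely the range of primes that can serve as the larger summand in a decomposition $2n = p + q$ of an even $2n \geq 8$ into distinct odd primes (so that restricting to this range loses no Goldbach decomposition), and checking the $p$-adic valuations of $(2n)!$ and $(2n-1)!$ that make $\abs{P} = p$ and hence $C_{A_m}(P) = C_{A_m}(g)$ for $m \in \{2n-1, 2n\}$.
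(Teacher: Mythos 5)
Your proposal is correct and follows essentially the same route as the paper: the paper likewise deduces the corollary from Theorem \ref{cenver} by observing that for $n < p \leq 2n-3$ a Sylow $p$-subgroup $P$ of $A_{2n-1}$ has order $p$, is generated by a $p$-cycle $g$, and satisfies $C_{A_{2n}}(P)=C_{A_{2n}}(g)$ and $C_{A_{2n-1}}(P)=C_{A_{2n-1}}(g)$. Your additional bookkeeping (the containment $\pi(C_{A_{2n-1}}(P))\subseteq\pi(C_{A_{2n}}(P))$ making the cardinality condition equivalent to proper inclusion, and the check that the larger summand $s$ of $2n=s+t$ lies in $(n,2n-3]$) is exactly what the paper leaves implicit.
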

\begin{proof}
Following from Theorem \ref{cenver}. Note that $C_{A_{2n-1}}(g) = C_{A_{2n-1}}(P)$ and  $C_{A_{2n}}(g) = C_{A_{2n}}(P)$ for $1 \neq g \in P$, $P \in Syl_p(A_{2n-1})$ and $n < p \leq 2n-3$.
\end{proof}

\begin{corollary}\label{cenver3}
The even number $2n \geq 8$ is a Goldbach's number if and only if there exists an odd prime $n < q \leq 2n-3$ such that $C_{A_{2n}}(Q)$ contains elements of order $pq$ but $C_{A_{2n-1}}(Q)$ does not contains elements of order $pq$ for some $q \neq p \in \pi(A_{2n-1})$.
\end{corollary}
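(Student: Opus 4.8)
The plan is to reduce this statement directly to Corollary~\ref{cenver2}, since both characterize Goldbach's numbers in terms of centralizers of Sylow $q$-subgroups for a prime $q$ in the range $n < q \leq 2n-3$. The key observation is that $C_{A_{2n}}(Q)$ contains an element of order $pq$ (with $p \neq q$ a prime) if and only if $C_{A_{2n}}(Q)$ contains an element of order $p$, because $Q \leq C_{A_{2n}}(Q)$ supplies the $q$-part: if $x \in C_{A_{2n}}(Q)$ has order $p$ and $1 \neq y \in Q$ has order $q$, then $xy = yx$ has order $pq$. Conversely, the $p$-part of an element of order $pq$ in $C_{A_{2n}}(Q)$ is itself an element of $C_{A_{2n}}(Q)$ of order $p$. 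Thus ``$C_{A_{2n}}(Q)$ contains an element of order $pq$ for some prime $p \neq q$'' is equivalent to ``$\pi(C_{A_{2n}}(Q))$ strictly contains $\{q\}$,'' i.e. $|\pi(C_{A_{2n}}(Q))| \geq 2$, and similarly for $A_{2n-1}$.

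First I would fix a prime $q$ with $n < q \leq 2n-3$ and a Sylow $q$-subgroup $Q \in Syl_q(A_{2n-1})$; by the paragraph preceding Corollary~\ref{cenver2}, $Q$ is also a Sylow $q$-subgroup of $A_{2n}$, and $|Q| = q$. Using Lemma~\ref{cean} (applied with the degree $2n$ or $2n-1$ in place of $n$, noting $q$ lies in the required range since $n < q$ gives $q > (2n)/2$ and $q \leq 2n-3 < 2n-2$), we have $C_{A_{2n}}(Q) = Q \times A_{(q+1,\,2n)}$ and $C_{A_{2n-1}}(Q) = Q \times A_{(q+1,\,2n-1)}$. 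Hence $\pi(C_{A_{2n}}(Q)) = \{q\} \cup \pi(A_{2n-q})$ and $\pi(C_{A_{2n-1}}(Q)) = \{q\} \cup \pi(A_{2n-1-q})$, where $q \notin \pi(A_{2n-q})$ since $2n - q < q$. Therefore $C_{A_{2n}}(Q)$ has an element of order $pq$ for some $p \neq q$ while $C_{A_{2n-1}}(Q)$ does not, precisely when $\pi(A_{2n-q}) \neq \emptyset$ and $\pi(A_{2n-1-q}) = \emptyset$, which is exactly the condition $|\pi(C_{A_{2n}}(P)) - \pi(C_{A_{2n-1}}(P))| \geq 1$ appearing in Corollary~\ref{cenver2} (here $P = Q$).

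With this dictionary in hand, the equivalence follows: if $2n$ is a Goldbach's number, Corollary~\ref{cenver2} supplies the prime $q$ and the Sylow subgroup $Q$ with $|\pi(C_{A_{2n}}(Q))| > |\pi(C_{A_{2n-1}}(Q))|$, and the translation above upgrades this to the existence of an element of order $pq$ in $C_{A_{2n}}(Q)$ but not in $C_{A_{2n-1}}(Q)$ for a suitable prime $p$ (which is automatically distinct from $q$ and lies in $\pi(A_{2n-1})$, indeed in $\pi(A_{2n-q})$). Conversely, the existence of such an element forces $|\pi(C_{A_{2n}}(Q)) - \pi(C_{A_{2n-1}}(Q))| \geq 1$, so Corollary~\ref{cenver2} returns that $2n$ is a Goldbach's number. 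I expect the only mild subtlety to be bookkeeping: checking that the prime $p$ extracted really satisfies $p \in \pi(A_{2n-1})$ (clear, since $p \in \pi(A_{2n-q})$ and $2n - q \leq 2n - 1$) and that $p \neq q$ (clear, since $p \mid |A_{2n-q}|$ forces $p \leq 2n - q < q$), together with confirming the range constraints on $q$ match the hypotheses of Lemma~\ref{cean} for both degrees $2n$ and $2n-1$. There is no substantial obstacle; the content is entirely in Lemma~\ref{cean} and Corollary~\ref{cenver2}, and the present statement is a reformulation.
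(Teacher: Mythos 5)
Your reduction is the one the paper intends: its printed proof of this corollary is only the words ``Immediate from Corollary \ref{cenver3}'' (an evident typo for Corollary \ref{cenver2}), and your bridge --- $C_{A_{2n}}(Q)$ has an element of order $pq$ iff $p \in \pi(C_{A_{2n}}(Q))$, because $Q$ is a central subgroup of order $q$ there, combined with the explicit centralizers from Lemma \ref{cean} --- is exactly the missing content. One sentence of yours is mis-stated, though not in a way your closing argument relies on: the condition ``$C_{A_{2n}}(Q)$ contains an element of order $pq$ but $C_{A_{2n-1}}(Q)$ does not, for some $p \neq q$'' translates to $\pi(A_{2n-q}) - \pi(A_{2n-1-q}) \neq \emptyset$, not to ``$\pi(A_{2n-q}) \neq \emptyset$ and $\pi(A_{2n-1-q}) = \emptyset$'' (take $2n = 12$, $q = 7$: here $\pi(A_{4}) = \{2,3\} \neq \emptyset$, yet $C_{A_{12}}(Q)$ has an element of order $35$ while $C_{A_{11}}(Q)$ does not, so the corollary's condition holds even though your displayed version of it fails). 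Since the paragraph that actually carries the equivalence works with the per-prime difference condition --- which is literally the condition of Corollary \ref{cenver2} --- the proof goes through once that one sentence is corrected.
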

\begin{proof}
Immediate from Corollary \ref{cenver3}.
\end{proof}

For the distinct odd primes $p, q$, it is easy to see that the group $G$ has elements of order $pq$ if and only if $G$ has a cycle subgroup of order $pq$. However, even if $G$ has a subgroup of order $pq$, $G$ need not necessarily contain elements of order $pq$. For the primes $p > q$ with $p \equiv 1 \mod q$, we may construct the semidirect product $G = P \rtimes Q$ with $\abs{P} = p $, $ \abs{Q} = q$ and $Q \leq Aut(P)$. Here $G$ is indeed a Frobenius group of order $pq$. The following is a general result, which is a direct consequence of G. Higman's theorem in \cite{Higman}.

\begin{theorem}\label{pqt}
Let $G$ be a $pq$-group. Then $G$ has no element of order $pq$ if and only if $G$ is a Frobenius group.
\end{theorem}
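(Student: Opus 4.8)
The plan is to prove both directions, with the forward direction (no element of order $pq$ $\Rightarrow$ Frobenius) being the substantive one. First I would recall the setup: $G$ is a $\{p,q\}$-group, i.e.\ $\abs{G} = p^a q^b$ for some $a, b \geq 1$. The easy direction is immediate: if $G$ is a Frobenius group with kernel $K$ and complement $H$, then every element lies either in $K$ or in some conjugate of $H$ (these being the only possibilities for where a nonidentity element sits), and within $K$ or within a complement the element orders are prime powers only — indeed a Frobenius kernel is nilpotent and a Frobenius complement acting on it has order coprime to $\abs{K}$, so no element can have order divisible by both $p$ and $q$. I would spell this out by noting that an element of order $pq$ would generate a cyclic group meeting both $K\setminus\{1\}$ and a complement nontrivially, contradicting $K \cap H^g = 1$.

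For the forward direction I would appeal directly to G.\ Higman's classification cited in \cite{Higman}. The key step is: a $\{p,q\}$-group with no element of order $pq$ is, in Higman's terminology, a group in which any two elements of coprime prime-power order commuting would produce an element of order $pq$; hence the centralizer structure is tightly constrained. I would invoke Higman's theorem, which characterizes finite groups all of whose elements have prime-power order (CP-groups) — but here we need the two-prime refinement: if $\abs{G}$ has exactly two prime divisors $p, q$ and $G$ contains no element of order $pq$, then $G$ has a normal Sylow subgroup for one of the primes, and the other Sylow subgroup acts on it fixed-point-freely, so $G$ is Frobenius. Concretely, one argues that a Sylow $p$-subgroup $P$ and a Sylow $q$-subgroup $Q$ satisfy $C_G(x)$ is a $p$-group for every $1 \neq x \in P$ and a $q$-group for every $1 \neq y \in Q$ (since a commuting pair of a nontrivial $p$-element and $q$-element yields an element of order $pq$); this is exactly the hypothesis that makes $G$ a Frobenius group once one knows one Sylow subgroup is normal, which follows from Burnside's normal $p$-complement theorem or from Higman's analysis.

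I expect the main obstacle to be making the citation to \cite{Higman} do precisely the work claimed: Higman's paper is about groups in which every element has prime-power order, and one must check that the two-prime case of that classification indeed yields exactly ``Frobenius'' (as opposed to, say, a $2$-Frobenius group or a more exotic configuration). The cleanest route is probably to avoid leaning too hard on the literature and instead give the short direct argument: let $1 \neq x \in Z(P_0)$ for a minimal normal-ish subgroup, observe $C_G(x)$ contains a full Sylow $p$-subgroup but no nontrivial $q$-element, deduce via a transfer or fusion argument that $G$ has a normal Sylow $p$-subgroup $P \trianglelefteq G$, then note $Q$ acts on $P$ with $C_P(y) = 1$ for all $1 \neq y \in Q$ (else an element of order $pq$), and conclude by \cite[Theorem 8.1.12]{Kurz} — the same fixed-point-free criterion used in the proof of Lemma \ref{cean} — that $G = P \rtimes Q$ is Frobenius. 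The only care needed is the symmetry: a priori the normal Sylow subgroup could be $Q$ rather than $P$, but the conclusion ``Frobenius'' is insensitive to this, so both sub-cases close identically.
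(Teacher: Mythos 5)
Your proposal is correct and follows essentially the same route as the paper: observe that having no element of order $pq$ in a $\{p,q\}$-group is equivalent to all element orders being prime powers, invoke Higman's theorem to obtain $G = P \rtimes Q$ (with the roles of $P$ and $Q$ possibly interchanged) acting fixed-point-freely, and conclude via \cite[Theorem 8.1.12]{Kurz}; the converse is the standard kernel/complement partition argument. Your extra caveats (solubility via Burnside's $p^aq^b$ theorem making Higman's soluble case applicable, and the symmetry in which Sylow subgroup is normal) are sensible but the paper's proof is the same citation-based argument.
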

\begin{proof}
The $pq$-group $G$ has no $pq$-element if and only if $G$ has only elements of prime power orders, thus Higman's theorem \cite{Higman} yields $G = P \rtimes Q$ ($P, Q$ possibly interchangeable) and $Q$ acts fixed-point-freely on $P$. Applying Theorem 8.1.12 in \cite{Kurz}, we know $G$ is a Frobenius group.  Conversely, a $pq$-Frobenius group obviously has no element of order $pq$.
\end{proof}

By Problem 6.16 of \cite{Is}, it follows that $\abs{Q} < \frac{1}{2} \abs{P}$ for the odd Frobenius group $G = P\rtimes Q$.
\begin{corollary}\label{pqt2}
Let $G$ be a $\{p, q\}$-separable group, and $p, q$ be distinct odd prime divisors of $\abs{G}$. Then $G$ contains no element of order $pq$ if and only if Hall $\{p, q\}$-subgroups of $G$ are Frobenius groups.
\end{corollary}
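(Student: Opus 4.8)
The plan is to deduce the corollary from Theorem \ref{pqt} together with the standard Hall-type theory for $\{p,q\}$-separable groups. Recall that since $G$ is $\{p,q\}$-separable it possesses Hall $\{p,q\}$-subgroups, any two of them are conjugate, and every $\{p,q\}$-subgroup of $G$ is contained in one of them (see \cite{Kurz}); I would quote these facts rather than reprove them. Moreover, because both $p$ and $q$ divide $\abs{G}$, a Hall $\{p,q\}$-subgroup $H$ satisfies $\pi(H) = \{p,q\}$, so $H$ is a $pq$-group in the sense of Theorem \ref{pqt}. This last remark is exactly what makes Theorem \ref{pqt} applicable to $H$, and I would isolate it explicitly at the outset.

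For the ``only if'' half, I would argue as follows. Suppose $G$ has no element of order $pq$ and let $H$ be any Hall $\{p,q\}$-subgroup of $G$. Then $H \leq G$ inherits the property of having no element of order $pq$, and since $H$ is a $pq$-group, Theorem \ref{pqt} forces $H$ to be a Frobenius group; conjugacy of the Hall $\{p,q\}$-subgroups then yields the conclusion for all of them.

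For the ``if'' half I would use the contrapositive. Suppose some $g \in G$ has order $pq$; then $\langle g \rangle$ is a $\{p,q\}$-subgroup, hence is contained in some Hall $\{p,q\}$-subgroup $H$. If $H$ were a Frobenius group, Theorem \ref{pqt} would give that $H$ has no element of order $pq$, contradicting $g \in H$. Thus, if the Hall $\{p,q\}$-subgroups of $G$ are Frobenius groups, $G$ has no element of order $pq$.

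The argument is short, and the only non-routine ingredient is the existence–conjugacy–covering package for Hall $\{p,q\}$-subgroups of a $\{p,q\}$-separable group; pinning down the exact citation in \cite{Kurz} is the step I would be most careful about. I would also note that the hypothesis that $p$ and $q$ are odd is precisely what is needed to invoke Theorem \ref{pqt} for $H$ — the case where one of the two primes equals $2$ is genuinely exceptional, as $S_4$ (a $\{2,3\}$-group with no element of order $6$ that is not a Frobenius group) already shows — and that no separate solvability hypothesis is required, since every $\{p,q\}$-group is solvable by Burnside's $p^a q^b$-theorem.
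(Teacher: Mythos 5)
Your reduction is certainly the one the paper intends (its own proof is simply ``Omitted''): use the Hall--\v{C}unihin package for $\{p,q\}$-separable groups (existence, conjugacy, and the fact that every $\{p,q\}$-subgroup lies in a Hall $\{p,q\}$-subgroup), note that a Hall $\{p,q\}$-subgroup $H$ satisfies $\pi(H)=\{p,q\}$ because both primes divide $\abs{G}$, and then apply Theorem \ref{pqt} to $H$ in each direction. As a derivation \emph{from} Theorem \ref{pqt} this is complete and correct, and the ``if'' half in particular is sound: a $\{p,q\}$-Frobenius group has kernel a Sylow subgroup for one prime and complement a group of the other prime's order (kernel and complement have coprime orders), so it really has no element of order $pq$, and the covering property transfers this to $G$.

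The genuine problem is not in your reduction but in the black box, and it makes the ``only if'' half of the Corollary false as stated. Higman's theorem classifies soluble groups all of whose elements have prime power order, and for $\abs{\pi(G)}=2$ the list contains not only Frobenius groups but also $2$-Frobenius groups; contrary to your closing remark, these are not confined to the prime $2$ ($S_4$ is merely the smallest instance). Concretely, let $V=\mathbb{F}_{3^6}$, let $C_7$ act on $V$ by multiplication by a primitive $7$th root of unity $\zeta$ (fixed-point-freely, since $7\mid 3^6-1$) and let $C_3$ act by the field automorphism $x\mapsto x^9$, which normalizes $\langle\zeta\rangle$ and induces on it an automorphism of order $3$. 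Then $G=V\rtimes(C_7\rtimes C_3)$ has order $3^7\cdot 7$; every element of $G$ has order $1$, $3$, $9$ or $7$ (elements over the $C_7$-part have order $7$ because $\zeta$ acts fixed-point-freely, and elements over the $C_3$-part cube into the elementary abelian group $V$), so $G$ has no element of order $21$. Yet $G$ is not a Frobenius group: its Fitting subgroup is $V$, which is an irreducible module, so the only candidate kernel is $V$, and the order-$3$ element centralizes the nonzero subgroup $\mathbb{F}_9\subseteq V$. Since $G$ is soluble it is $\{3,7\}$-separable and is its own Hall $\{3,7\}$-subgroup, so it simultaneously refutes the forward direction of Theorem \ref{pqt} and of Corollary \ref{pqt2}. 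No repair of the write-up can avoid adding the $2$-Frobenius groups to the conclusion. Fortunately the only direction the paper later uses (in Theorem \ref{norver3}) is the reverse one, which your argument handles correctly.
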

\begin{proof}
Omitted.
\end{proof}

  As shown above, $pq$-group need not contain element of order $pq$ but this does not really affect the existence of elements of order $pq$ in the difference set $N_{A_{2n}}(Q) - N_{A_{2n-1}}(Q)$, the following result indeed shows both $N_{A_{2n}}(Q)$ and $N_{A_{2n-1}}(Q)$ has the same Frobenius subgroups, which is Part 6 of \textsc{Theorem A}.

\begin{theorem}\label{norver2}
The even number $2n \geq 8$ is a Goldbach's number if and only if there exists an odd prime $n < q \leq 2n-3$ such that $\abs{\pi(N_{A_{2n}}(Q)) - \pi(N_{A_{2n-1}}(Q))} \geq 1$ for some $Q \in Syl_q(A_{2n})$.
\end{theorem}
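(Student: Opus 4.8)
The plan is to make both normalizers explicit via Lemma~\ref{cean} and then compare their prime sets. Fix an odd prime $q$ with $n < q \leq 2n-3$ and note that Lemma~\ref{cean} applies to $A_{2n}$ (since $n < q \leq 2n-2$) and to $A_{2n-1}$ (since $\tfrac{2n-1}{2} < q \leq 2n-3$); choosing $Q \in Syl_q(A_{2n})$ inside $A_{2n-1}$ (possible because a $q$-cycle with $q<2n$ may be taken to fix the point $2n$), we get
\[
N_{A_{2n}}(Q) = (Q \rtimes C_{q-1}) \times A_{(q+1,2n)}, \qquad N_{A_{2n-1}}(Q) = (Q \rtimes C_{q-1}) \times A_{(q+1,2n-1)},
\]
with the \emph{same} Frobenius block $Q \rtimes C_{q-1}$. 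Since all Sylow $q$-subgroups are conjugate, $\pi(N_{A_{2n}}(Q))$ and $\pi(N_{A_{2n-1}}(Q))$ do not depend on the choice of $Q$, and from $A_{(q+1,2n-1)} \leq A_{(q+1,2n)}$ one gets $\pi(N_{A_{2n-1}}(Q)) \subseteq \pi(N_{A_{2n}}(Q))$ together with
\[
\pi(N_{A_{2n}}(Q)) - \pi(N_{A_{2n-1}}(Q)) = \bigl(\pi(A_{2n-q}) - \pi(A_{2n-1-q})\bigr) - \bigl(\{q\} \cup \pi(q-1)\bigr).
\]

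For the ``if'' direction, suppose this set contains a prime $r$. Because $q$ is odd, $2 \mid q-1$, hence $r$ is odd. From $r \in \pi(A_{2n-q})$ (and $2n-q \geq 3$) we get $r \leq 2n-q$, and from $r \notin \pi(A_{2n-1-q})$ we get $r > 2n-1-q$, the boundary case $q=2n-3$ being checked directly; hence $r = 2n-q$ is an odd prime with $r < n < q$, so $2n = r+q$ is a Goldbach number. Equivalently, one may observe that this difference set is contained in $\pi(C_{A_{2n}}(Q)) - \pi(C_{A_{2n-1}}(Q))$ and simply quote Corollary~\ref{cenver2}.

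For the ``only if'' direction, write $2n = s+t$ with $s > t$ distinct odd primes and set $q = s$, so that $n < q \leq 2n-3$ and
\[
N_{A_{2n}}(Q) = (Q \rtimes C_{s-1}) \times A_t, \qquad N_{A_{2n-1}}(Q) = (Q \rtimes C_{s-1}) \times A_{t-1}.
\]
Since $t$ is prime, $t \in \pi(A_t)$ while $t \notin \pi(A_{t-1})$ and $t \neq s$, so $t$ lies in $\pi(N_{A_{2n}}(Q)) - \pi(N_{A_{2n-1}}(Q))$ \emph{provided} $t \nmid s-1$. This proviso is exactly where the main difficulty lies: when $t \mid s-1$ (equivalently $t \mid 2n-1$) the prime $t$ is already swallowed by the common cyclic factor $C_{s-1}$, the choice $q=s$ fails, and one must instead locate a Goldbach representation $2n = s'+t'$ whose smaller prime $t'$ satisfies $t' \nmid s'-1$, or exhibit another admissible prime $q$ doing the job. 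Securing such a representation for every relevant $2n$ — and dispatching the finitely many small exceptional values by the explicit computation behind Theorem~\ref{smanum} — is the delicate core of the argument and the step I expect to require the most care.
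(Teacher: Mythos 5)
Your ``if'' direction is fine, and your one-line alternative (the normalizer difference set is contained in the centralizer difference set, so quote Corollary~\ref{cenver2}) is the cleanest way to phrase it. But the obstruction you flag in the ``only if'' direction is not merely ``the delicate core of the argument'' --- it is fatal, and the theorem as stated is false. For $2n=10$ the only representation as a sum of distinct odd primes is $10=3+7$ and the only admissible prime is $q=7$; since $3\mid 6=q-1$, Lemma~\ref{cean} gives $\pi(N_{A_{10}}(Q))=\{7\}\cup\pi(6)\cup\pi(A_{(8,10)})=\{2,3,7\}=\pi(N_{A_{9}}(Q))$, so the difference is empty. Similarly $16=3+13=5+11$ is a Goldbach number, but $3\mid 12$ and $5\mid 10$, so for both admissible $q\in\{11,13\}$ the new prime $2n-q$ is already absorbed by the $C_{q-1}$ factor and $\abs{\pi(N_{A_{16}}(Q))-\pi(N_{A_{15}}(Q))}=0$. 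In general the normalizer difference for a given $q$ is nonempty precisely when $2n-q$ is an odd prime not dividing $q-1$, equivalently not dividing $2n-1$, and there exist Goldbach numbers for which every representation $2n=q+t$ has $t\mid 2n-1$. So no cleverer choice of representation or of $q$ can close your gap.

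The paper's own proof makes exactly the slip you were guarding against: from $\pi(N_{A_{2n}}(Q))=\{q\}\cup\pi(C_{q-1})\cup\pi(A_{(q+1,2n)})$ it concludes $\pi(N_{A_{2n}}(Q))-\pi(N_{A_{2n-1}}(Q))=\pi(A_{(q+1,2n)})-\pi(A_{(q+1,2n-1)})$, silently discarding the term $\pi(C_{q-1})$ that must also be subtracted on the right --- the correction you correctly retained --- and thence asserts the false identity $\abs{\pi(N_{A_{2n}}(Q))-\pi(N_{A_{2n-1}}(Q))}=\abs{\pi(C_{A_{2n}}(Q))-\pi(C_{A_{2n-1}}(Q))}$. (The centralizer version, Corollary~\ref{cenver2}, is unaffected, because there only $\{q\}$ needs to be removed and $q>n\geq 2n-q$ makes that harmless.) Your analysis is therefore correct as far as it goes; the honest conclusion of your ``only if'' discussion is not a harder lemma still to be proved but a counterexample to the theorem and to the paper's proof of it.
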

\begin{proof}
For the prime  $n < q \leq 2n-3$  and $Q \in Syl_p(A_{2n})$, the application of  Lemma \ref{cean} yields that $$\pi(N_{A_{2n}}(Q)) = \{q \} \cup \pi(C_{q-1}) \cup \pi(A_{(q+1, 2n)})$$and  $$ \pi(N_{A_{2n-1}}(Q)) = \{q \} \cup \pi(C_{q-1}) \cup \pi(A_{(q+1, 2n-1)}),$$ thus we have that $$\pi(N_{A_{2n}}(Q)) - \pi(N_{A_{2n-1}}(Q)) = \pi(A_{(q+1, 2n)}) - \pi(A_{(q+1, 2n-1)}).$$
Using Lemma \ref{cean} again, it follows that $$\pi(C_{A_{2n}}(Q)) = \{q \} \cup \pi(A_{(q+1, 2n)}) \mbox{ and }  \pi(C_{A_{2n-1}}(Q)) = \{q \} \cup \pi(A_{(q+1, 2n-1)}),$$ hence we get that $$\pi(C_{A_{2n}}(Q)) - \pi(C_{A_{2n-1}}(Q)) = \pi(A_{(q+1, 2n)}) - \pi(A_{(q+1, 2n-1)}).$$Therefore, we conclude that $$\abs{\pi(N_{A_{2n}}(Q)) - \pi(N_{A_{2n-1}}(Q))} = \abs{\pi(C_{A_{2n}}(Q)) - \pi(C_{A_{2n-1}}(Q))}.$$
Corollary \ref{cenver2} implies the desired result.
\end{proof}

The next result may be compared with Corollary \ref{cenver3} replacing $C_{A_{2n-1}}(Q)$ and $C_{A_{2n}}(Q)$ by $N_{A_{2n-1}}(Q)$ and $N_{A_{2n}}(Q)$, respectively.

\begin{theorem}\label{norver3}
The even number $2n \geq 8$ is a Goldbach's number if and only if there exists an odd prime $n < q \leq 2n-3$ such that $N_{A_{2n}}(Q)$ contains elements of order $pq$ but $N_{A_{2n-1}}(Q)$ does not contains elements of order $pq$ for some $q \neq p \in \pi(A_{2n-1})$.
\end{theorem}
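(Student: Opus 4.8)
The plan is to reduce this equivalence to Corollary~\ref{cenver3} by proving that, for a Sylow $q$-subgroup $Q$ of $A_{2n}$ with $n<q\leq 2n-3$, the normalizer $N_{A_{2n}}(Q)$ contains an element of order $pq$ if and only if the centralizer $C_{A_{2n}}(Q)$ does, and similarly inside $A_{2n-1}$. Once this is established, the condition of the theorem becomes word for word the condition of Corollary~\ref{cenver3}, so both are equivalent to $2n$ being a Goldbach's number.

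First I would apply Lemma~\ref{cean}, once with the role of its ``$n$'' played by $2n$ and once by $2n-1$ (both legitimate, since $n<q\leq 2n-3$ gives $(2n)/2<q\leq 2n-2$ and $(2n-1)/2<q\leq 2n-3$), to obtain $N_{A_{2n}}(Q)=(Q\rtimes C_{q-1})\times A_{(q+1,\,2n)}$ and $N_{A_{2n-1}}(Q)=(Q\rtimes C_{q-1})\times A_{(q+1,\,2n-1)}$, where $Q\rtimes C_{q-1}$ is a Frobenius group of order $q(q-1)$ with kernel $Q$ of prime order $q$. The key point is that in such a Frobenius group every element has order dividing $q$ or dividing $q-1$: a ``mixed'' element of order $q\cdot m$ with $1\neq m\mid q-1$ would have a nontrivial power lying in $Q$ and another nontrivial power of order $m$ centralizing it, contradicting the fixed-point-free action of the complement on $Q$. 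Therefore, in the direct product $(Q\rtimes C_{q-1})\times A_{(q+1,\,2n)}$ any element of order $pq$ (with $p\neq q$) must have its $q$-part inside $Q$ --- here I use $q\nmid\abs{A_{(q+1,\,2n)}}=(2n-q)!/2$, valid because $q>n\geq 2n-q$ --- and hence its $p$-part inside $A_{(q+1,\,2n)}\cong A_{2n-q}$. It follows that $N_{A_{2n}}(Q)$ possesses an element of order $pq$ precisely when $p\in\pi(A_{2n-q})$, which is exactly the condition under which $C_{A_{2n}}(Q)=Q\times A_{(q+1,\,2n)}$ possesses one; running the same argument inside $A_{2n-1}$ gives the analogue with $2n-q$ replaced by $2n-1-q$.

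With these equivalences in hand, the event ``$N_{A_{2n}}(Q)$ contains an element of order $pq$ while $N_{A_{2n-1}}(Q)$ does not'' holds for some prime $p\neq q$ in $\pi(A_{2n-1})$ if and only if the same event holds with $N$ replaced by $C$ throughout, so the theorem is immediate from Corollary~\ref{cenver3}. (Alternatively one may finish by hand: such a $p$ lies in $\pi(A_{2n-q})\setminus\pi(A_{2n-1-q})$, and since $q$ is odd the possibility $p=2$ would force $2n-q=4$, i.e. $q=2n-4$ even, a contradiction; hence $p$ is odd with $p=2n-q$, so $2n=p+q$ is a Goldbach decomposition, and conversely any decomposition $2n=p+q$ into distinct odd primes with $p<q$ yields $n<q\leq 2n-3$ and $p=2n-q\in\pi(A_{2n-q})\setminus\pi(A_{2n-1-q})$.)

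I expect the only subtle step to be the Frobenius observation that enlarging the centralizer $C_{A_{2n}}(Q)$ to the normalizer $N_{A_{2n}}(Q)$ creates no new element of order $pq$; concretely one must exclude an element whose order is $q$ times a prime divisor of $q-1$ living inside the factor $Q\rtimes C_{q-1}$, and this is precisely what the fixed-point-freeness of the action of $C_{q-1}$ on $Q$ forbids. The remaining ingredients --- Lemma~\ref{cean} for the normalizer/centralizer structure and the elementary equivalence ``$A_m$ has an element of prime order $p$'' $\iff$ ``$p\in\pi(A_m)$'' --- are routine bookkeeping.
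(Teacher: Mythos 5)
Your proposal is correct and follows essentially the same route as the paper: both rest on Lemma~\ref{cean}'s decomposition $N_{A_{m}}(Q)=(Q\rtimes C_{q-1})\times A_{(q+1,m)}$ together with the observation that the Frobenius group $Q\rtimes C_{q-1}$ contributes no element of order $pq$, so the normalizer condition collapses to the centralizer condition of Corollary~\ref{cenver3} (the paper cites Corollary~\ref{pqt2} for the Frobenius step where you prove it directly). If anything, your write-up is the more complete one, since the paper only spells out the forward direction while your reduction handles both implications at once.
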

\begin{proof}
If $2n \geq 8$ is a Goldbach's number, then we may write $2n = q + p$ for odd primes $q > p$. Set $x = (1, 2, \cdots, q)$ and $Q = \langle x \rangle$. Corollary \ref{cenver2} yields
 $p$ divides $\abs{C_{A_{2n}}(Q)}$ and so there exists element $g$ of order $pq$ with $ g \in C_{A_{2n}}(Q) \leq N_{A_{2n}}(Q)$  but $p$ does not divides $\abs{C_{A_{2n-1}}(Q) }$. (Thus $C_{A_{2n-1}}(Q)$ does not contain any element of order $pq$.) If $N_{A_{2n-1}}(Q)$ has elements of order $pq$, then Lemma \ref{cean} implies $pq$ divides $\abs{Q \rtimes C_{q-1}}$, and
 $Q \rtimes C_{q-1}$ contains elements of order $pq$. However, this is impossible since Lemma \ref{cean} also yields $Q \rtimes C_{q-1}$ is a Frobenius group and Corollary \ref{pqt2} shows
 $Q \rtimes C_{q-1}$ has no element of order $pq$. The proof is completed.
\end{proof}
We mention that $A_n$ ($n \geq 4$) can  be characterized by the full set of orders of  normalizers of its Sylow's subgroups as stated in Theorem 1 of \cite{B57}.
\section{Group algebra}
In fact, the strongly binary Goldbach's conjecture is also expressed in the language of group algebra. Let $G$ be a finite group, and $p, q \in \pi(G)$ be distinct odd primes. Set $$s(p,q,g) = \underset{\underset{o(g) = pq}{x \in G}}{\Sigma}g^x.$$ It is easy to see that $s(p,q,g) \in Z(\Bbb{C}[G])$,where $Z(\Bbb{C}[G])$ denotes the center of group algebra $ \Bbb{C}[G]$ of the group $G$ over complex field $\Bbb{C}$ and it is a subalgebra of the group algebra $\Bbb{C}[G]$. By Theorem 2.4 of \cite{Is}, we see that all  conjugacy class sums of $G$ form a basis of $Z(\Bbb{C}[G])$.  The space linearly spanned by all possible $s(p,q,g)$'s is written as $\mathscr{U}\hspace{-0.5mm}(G) = \mathscr{L}(s(p,q,g) \mid g \in G, \mbox{distinct odd primes} \, p, q \in \pi(G))$ and we call $\mathscr{U}\hspace{-0.5mm}(G)$ as  \emph{the biprimary space} of $G$, then $\mathscr{U}\hspace{-0.5mm}(G)$ is a subspace of $ Z(\Bbb{C}[G])$. It is evident that $dim(\mathscr{U}\hspace{-0.5mm}(A_{2n-1})) \leq dim(\mathscr{U}\hspace{-0.5mm}(A_{2n}))$. Fix $$\tilde{s}(p,q,g) = \underset{\underset{o(g) = pq}{g \in A_{2n-1}, x \in A_{2n}}}{\Sigma}g^x,$$ and $$\tilde{\mathscr{U}}\hspace{-0.5mm}(A_{2n-1}) = \mathscr{L}(\tilde{s}(p,q,g) \mid g \in A_{2n-1}, \mbox{distinct odd primes} \, p, q \in \pi(A_{2n-1}))$$

It is clear that $\tilde{\mathscr{U}}\hspace{-0.5mm}(A_{2n-1})  \leq \mathscr{U}\hspace{-0.5mm}(A_{2n})$ and  $dim \mathscr{U}\hspace{-0.5mm}(A_{2n-1})  = dim \tilde{\mathscr{U}}\hspace{-0.5mm}(A_{2n-1}).$

The above observations imply the next result, which covers Part 7 of \textsc{Theorem} A.
\begin{theorem}\label{spdim}
There exist different odd primes $p, q $ with $2n = p + q$  $\iff  dim(\mathscr{U}\hspace{-0.5mm}(A_{2n-1})) < dim(\mathscr{U}\hspace{-0.5mm}(A_{2n}))$
$\iff \tilde{\mathscr{U}}\hspace{-0.5mm}(A_{2n-1}) < \mathscr{U}\hspace{-0.5mm}(A_{2n}) $
\end{theorem}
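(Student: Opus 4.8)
Proof proposal. The engine behind the statement is the fact, already cited from \cite{Is} (Theorem 2.4 there), that the conjugacy class sums of a finite group form a basis of the centre of its complex group algebra. Hence, for any finite group $G$, the biprimary space $\mathscr{U}(G)$ has as a basis exactly the class sums of those conjugacy classes of $G$ whose elements have order $pq$ for some pair of distinct odd primes $p,q$; call these the \emph{biprimary classes} of $G$. Thus $\dim \mathscr{U}(G)$ is the number of biprimary classes of $G$. Similarly, each generator $\tilde{s}(p,q,g)$ is a positive linear combination of $A_{2n}$-class sums of biprimary elements of $A_{2n}$ lying in $A_{2n-1}$, so $\tilde{\mathscr{U}}(A_{2n-1})$ is spanned by the $A_{2n}$-class sums of the biprimary classes of $A_{2n}$ that meet $A_{2n-1}$, and $\dim \tilde{\mathscr{U}}(A_{2n-1})$ is the number of such classes. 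The first task of the proof is to record this dictionary carefully and, in particular, to justify the three observations preceding the statement: that $\tilde{\mathscr{U}}(A_{2n-1}) \le \mathscr{U}(A_{2n})$ is immediate, while $\dim \tilde{\mathscr{U}}(A_{2n-1}) = \dim \mathscr{U}(A_{2n-1})$ amounts to checking that distinct biprimary classes of $A_{2n-1}$ stay distinguished after passing into $A_{2n}$; this fusion-control point is settled using the description of conjugacy in $S_n$ and $A_n$ by cycle type together with the parity criterion for an $S_n$-class to split in $A_n$.

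Granting the dictionary, the equivalence of the second and third conditions is formal: $\tilde{\mathscr{U}}(A_{2n-1})$ is a subspace of $\mathscr{U}(A_{2n})$, so it is proper if and only if its dimension is strictly smaller, that is (by the displayed equality) if and only if $\dim \mathscr{U}(A_{2n-1}) < \dim \mathscr{U}(A_{2n})$. It therefore remains to link either of these with the existence of a decomposition $2n = p + q$ into distinct odd primes.

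For the direction ``Goldbach $\Rightarrow$ proper'', I would argue directly: if $2n = p + q$ with $p,q$ distinct odd primes, then $x := (1,2,\dots,p)(p+1,\dots,2n) \in A_{2n}$ has order $pq$ and moves every symbol in $\{1,\dots,2n\}$. Since $A_{2n-1}$ is the stabiliser of the point $2n$ and $2n \ge 8$, every $A_{2n}$-conjugate of $x$ is again fixed-point-free, so none of them lies in $A_{2n-1}$; hence the class sum of $x$ belongs to $\mathscr{U}(A_{2n})$ but not to $\tilde{\mathscr{U}}(A_{2n-1})$, and the inclusion is proper. This also shows $\dim \mathscr{U}(A_{2n-1}) < \dim \mathscr{U}(A_{2n})$.

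For the converse, if $\tilde{\mathscr{U}}(A_{2n-1}) < \mathscr{U}(A_{2n})$ then there is a biprimary $A_{2n}$-class $C$ missing $A_{2n-1}$, and conjugating a fixed point to the symbol $2n$ shows that every element of $C$ is fixed-point-free. The step I expect to be the main obstacle is extracting from the mere existence of a fixed-point-free biprimary element of $A_{2n}$ an honest representation $2n = p' + q'$: a fixed-point-free element of order $pq$ may carry cycle type such as $(p,p,q,q)$ or $(pq,p)$, which is not itself a two-term partition into primes, so one cannot conclude by inspecting the single class $C$. The plan is instead to reduce to the already-proved prime-graph criterion, i.e. to show that failure of the equality forces $\Gamma(A_{2n-1})$ to be a proper subgraph of $\Gamma(A_{2n})$ — equivalently, via Lemma \ref{v5} and a bookkeeping of biprimary classes against the edge count of Theorem \ref{edgeequ}, that the dimension gap cannot be strictly positive unless $\abs{\mathcal{E}(A_{2n}) - \mathcal{E}(A_{2n-1})} \ge 1$ — and then to invoke Theorem \ref{goldb3} (or Corollary \ref{goldb4}) to conclude that $2n$ is a Goldbach number. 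Thus the crux is a purely combinatorial comparison, cycle type by cycle type and with the correct splitting multiplicities, of the biprimary classes of $A_{2n}$, those of $A_{2n-1}$, and the edges introduced on passing from $A_{2n-1}$ to $A_{2n}$.
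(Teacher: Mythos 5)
The paper offers no proof to compare against (its proof reads ``Omitted'' and leans on the unproved ``observations'' preceding the statement), so your proposal has to stand on its own, and it does not. Two of its load-bearing steps fail. First, the fusion-control claim $\dim\tilde{\mathscr{U}}(A_{2n-1})=\dim\mathscr{U}(A_{2n-1})$, which you say is ``settled'' by the parity/splitting criterion, is in fact refuted by it: take $n=5$ and $g=(1\,2\,3)(4\,5\,6\,7\,8)\in A_9$. The cycle type $(3,5,1)$ has distinct odd parts, so the $S_9$-class of $g$ splits into two $A_9$-classes; but in $S_{10}$ the type becomes $(3,5,1,1)$, the centralizer contains the odd involution $(9\;10)$, and the two $A_9$-classes fuse into a single $A_{10}$-class. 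Since these are the only biprimary classes of $A_9$, one gets $\dim\tilde{\mathscr{U}}(A_9)=1<2=\dim\mathscr{U}(A_9)$. This breaks your derivation of the equivalence of the second and third conditions, and also your remark that the forward construction ``also shows'' $\dim\mathscr{U}(A_{2n-1})<\dim\mathscr{U}(A_{2n})$, since that inference needs the false equality. (The paper itself asserts this equality without proof, so you have inherited a flawed premise.)

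Second, and more seriously, the converse direction that you defer to ``bookkeeping'' cannot be completed, because the third condition is unconditionally true for every even $2n\ge 8$ and hence carries no information about representations $2n=p+q$. Indeed $\tilde{\mathscr{U}}(A_{2n-1})<\mathscr{U}(A_{2n})$ holds iff $A_{2n}$ has a biprimary conjugacy class consisting of fixed-point-free elements, and such classes always exist: every even $2n\ge 14$ (and $2n=8$) can be written as $3a+5b$ with $a,b\ge 1$, giving a fixed-point-free even permutation of cycle type $(3^a,5^b)$ and order $15$, while $10=3+7$ and $12=5+7$ give the types $(3,7)$ and $(5,7)$. None of these witnesses has anything to do with a decomposition $2n=p+q$. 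Consequently the implication ``third condition $\Rightarrow$ Goldbach'' is literally the assertion that every even $2n\ge 8$ is a Goldbach number, and no cycle-type comparison with Theorem \ref{edgeequ} or reduction to Theorem \ref{goldb3} can establish it. You correctly sensed that this was the main obstacle, but it is not an obstacle to be overcome by more careful counting; it shows that the stated equivalence, together with the paper's preparatory claims, is not provable by the proposed route (nor, as far as I can see, at all, short of proving the conjecture itself).
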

\begin{proof}
Omitted.
\end{proof}

In fact, the basis vectors $s(p,q,g)'s$ are computable. Set $s_1, s_2, \cdots, s_r$ to be a basis of the biprimary space $\mathscr{U}\hspace{-0.5mm}(A_n)$ ($n \geq 8$) and each $s_i$ stands for some $s(p,q,g)$ in a suitable order. Let $\{ e_1, e_2, \cdots, e_t \}$ be the full set of central primitive idempotent elements of group algebra $\Bbb{C}[A_n]$, let $\{ K_1, K_2, \cdots, K_t \}$ be the full set of class sums of $A_n$,
then we have $$(s_1, s_2, \cdots, s_r) = (K_1, K_2, \cdots, K_t)A_{tr},$$
where the $(i,j)$-entries $a_{ij}$ of $A_{tr}$ is either $1$ or else $0$.

The application of Theorem 2.12 of \cite{Is}, we conclude $$(e_1, e_2, \cdots, e_t) = (K_1, K_2, \cdots, K_t) \frac{1}{\abs{A_n}} \overline{X}^TC,$$
where $\overline{X}$ is the complex conjugate matrix of $X$ and $X$ is the character table of $A_n$, which is viewed as a matrix; the superscript $T$ denotes transpose; and $C$ is a diagonal matrix whose diagonal entries are all degrees $\chi_i(1)$ of irreducible characters $\chi_i$ of $A_n$. By the proof of Theorem 2.18 of \cite{Is}, we know $\abs{A_n}I = XD\overline{X}^T$, where $I$ is the identity matrix, $D$ is a diagonal matrix whose diagonal entries are the sizes $\abs{\mathscr{K}_i}$ of conjugacy classes $\mathscr{K}_i$ of $A_n$,  thus $(\overline{X}^T)^{-1} = \frac{1}{\abs{A_n}}XD$. We may deduce $$(K_1, K_2, \cdots, K_t) = (e_1, e_2, \cdots, e_t)C^{-1}XD$$ and so

$$(s_1, s_2, \cdots, s_r) = (e_1, e_2, \cdots, e_t)C^{-1}XDA_{tr}$$
 We may further derive

$$(s_1, s_2, \cdots, s_r) = (e_1, e_2, \cdots, e_t) M_{tr},$$

$$M_{tr} = \left ( \begin{array}{ccccc}
\frac{\chi_1(s_1)}{\chi_1(1)} & \frac{\chi_1(s_2)}{\chi_1(1)}& \cdots&\cdots& \frac{\chi_1(s_r)}{\chi_1(1)}\\
\frac{\chi_2(s_1)}{\chi_2(1)} &  \frac{\chi_2(s_2)}{\chi_2(1)}&\cdots&\cdots&\frac{\chi_2(s_r)}{\chi_2(1)}  \\
\vdots&\vdots&\ddots&&\vdots\\
\vdots&\vdots&&\ddots&\vdots\\
\frac{\chi_t(s_1)}{\chi_t(1)}& \frac{\chi_t(s_2)}{\chi_t(1)}&\cdots&\cdots& \frac{\chi_t(s_r)}{\chi_t(1)}\\
\end{array}
\right).$$

Theorem 3.7 of \cite{Is} implies the entries $\frac{\chi_i(s_j)}{\chi_i(1)}$ are algebraic integers. The $\chi_i 's$ are all irreducible characters of $A_n$. Since the two sets of  vectors are linearly independent respectively, it follows that the rank $R(M_{rt})$ of $M_{rt}$ is equal to $r$.

Using character theory, we may also extract some more specific information regarding elements of order $pq$ in $G$. For examples, if there exists an irreducible character of $G$ which is neither $p$-rational nor $q$-rational, then $G$ has $pq$-elements, which is a variation of Lemma 14.2 of \cite{Is}.

\textsc{Proof of Theorem} A.  Follows from  Theorem \ref{goldb3}, Corollary \ref{goldb4}, Theorem \ref{reduc}, Corollary \ref{cenver2}, Theorem \ref{norver2} and Theorem \ref{spdim}.

\textsc{Proof of Theorem} B.  Follows from  Theorems \ref{go-sp1} and \ref{go-sp2}.

\bigskip

\noindent \textbf{Acknowledgments} \quad This work was supported by National Natural Science Foundation of China (Grant No.11471054).
The first author wishes to thank  Prof. J. X. Bi for many helpful conversations on element order sets of finite simple groups and almost simple groups, especially on $A_n$ and $S_n$ for $n \geq 5$.

\bibliographystyle{amsalpha}

\bigskip
\begin{appendix}
\textbf{\large Appendix to GAP command codes}

The following GAP function is applied to computing the element order set of $A_n$.
\begin{verbatim}
jsqa:=function(n)
 local Al, ccreps, L, S;
 Al:=ConjugacyClasses(AlternatingGroup(n));
 ccreps:=List(Al, Representative);
 L:=List(ccreps, Order);
 S:=Set(L);
 return S;
 end;
\end{verbatim}
The following GAP command codes are used to compute $\abs{\mathcal{E}(A_{i}) - \mathcal{E}(A_{i-1})}$ when $5 \leq i \leq 60$.
\begin{verbatim}
> for i in [5..60] do
>  g := Difference(jsqa(i),jsqa(i-1));
>  for x in g do
>   ord := x;
>   n := FactorsInt(ord);
>   s := Size(n);
>   if s = 2 then
>   if n[1] <> n[2] then
>  Print(ord,":");
>  fi;fi;od;
>  Print( "::","\n");
>  od;
\end{verbatim}
\end{appendix}

\end{document}